\def\struckint{\mathop{%
\def\mathpalette##1##2{\mathchoice{##1\displaystyle##2}%
 {##1\textstyle##2}{##1\scriptstyle##2}{##1\scriptscriptstyle##2}}%
\mathpalette
{\vbox\bgroup\baselineskip0pt\lineskiplimit-1000pt\lineskip-1000pt
\halign\bgroup\hfill$}
{##$\hfill\cr{\intop}\cr\diagup\cr\egroup\egroup}%
}\limits}
\def\inf{\mathop{\rm inf}\nolimits}
\def\sup{\mathop{\rm sup}\nolimits}
\def\stochk{\mathop{\mathcal{S}_k}}
\def\simplexk{\mathop{\Delta_k}\nolimits}
\def\part{\mathop{\mbox{Part}}\nolimits}
\def\ell{\mathop{[l]}\nolimits}
\def\equalinlaw{\mathop{=_{\mathcal{L}}}\nolimits}
\newcommand{\zz}[1]{\mathbb #1}
\newcommand{\xnorm}[1]{ \Vert #1 \Vert } 
\newtheorem{thm}{Theorem}[section]
\newtheorem{prop}[thm]{Proposition}
\newtheorem{cor}[thm]{Corollary}
\theoremstyle{definition}
\newtheorem{defn}[thm]{Definition}
\newtheorem{example}[thm]{Example}
\newtheorem{note}[thm]{Note}
\newtheorem{rmk}[thm]{Remark}
\title[Exchangeable Markov Processes]{Exchangeable
Markov Processes on $[k]^{\zz{N}}$ with Cadlag Sample Paths}
\author{Harry Crane}\address{Rutgers University\\ Department
of Statistics \\ 
110 Frelinghuysen Road\\
Piscataway, NJ 08854.}
\author{Steven P. Lalley}\address{University of Chicago\\
Department of Statistics\\
Eckhart Hall\\
5734 S.\ University Ave.\\
Chicago, IL 60637}
\date{\today}
\subjclass{Primary 60J25, 60G09}
\thanks{First author supported by NSF grant DMS-1308899 and NSA grant H98230-13-1-0299.  Second author supported by  NSF grant DMS-1106669}
\keywords{exchangeable random partition; de Finetti's theorem; Hewitt-Savage theorem; paintbox
process; interacting particle system} 
\begin{document}
\maketitle
\begin{abstract}
Any exchangeable, time-homogeneous Markov processes on
$[k]^{\mathbb{N}}$ with cadlag sample paths projects to a Markov
process on the simplex whose sample paths are cadlag and of locally
bounded variation.  Furthermore, any such process has a de
Finetti-type description as a mixture of i.i.d.\ copies of
time-inhomogeneous Markov processes on $[k]$. In the Feller case,
these time-inhomogeneous Markov processes have a relatively simple
structure; however, in the non-Feller case a greater variety of
behaviors is possible since the transition law of the underlying
Markov process on $[k]^{\zz{N}}$ can depend in a non-trivial way on
the exchangeable $\sigma$-algebra of the process.
\end{abstract}

\section{Introduction}\label{section:introduction} 

We study exchangeable, time-homogeneous Markov processes with cadlag
sample paths on $[k]^{\zz{N}}$, the space of $k$-colorings of the
natural numbers $\zz{N}$.  See Section~\ref{ssec:preliminaries} for a
formal definition.  The state space is endowed with the
product-discrete topology, i.e.\ the topology of point-wise
convergence, under which it is compact and metrizable.  In previous
work, Crane \cite{Crane2012} characterized the class of exchangeable,
continuous-time Markov processes having the Feller property on
$[k]^{\zz{N}}$ with respect to the product-discrete topology; see
Section~\ref{section:feller} for a r\'esum\'e of Crane's results and a
discussion of their relation to the main results of this paper.
Exchangeable Markov processes with the Feller property are precisely
those exchangeable Markov processes that are \emph{consistent under
sub-sampling}, that is, for each $n\in \zz{N}$ the projection on the
first $n$ coordinates is itself Markov. Conversely, by Kolmogorov's extension
theorem  (\cite{Billingsley1995}, Chapter~7, Section~37) any consistent
family of exchangeable Markov processes on the finite state spaces
$[k]^{[n]}$ extends naturally to an exchangeable Markov process on
$[k]^{\zz{N}}$ with the Feller property. Another well-known family of exchangeable
partition-valued Feller processes is the class of exchangeable
fragmentation-coalescence (EFC) processes, which have been
characterized by Berestycki \cite{Berestycki2004}, cf.\ Bertoin
\cite{Bertoin2006} and Pitman \cite{Pitman2005}.  An EFC process can
be expressed as a superposition of independent, exchangeable, coalescent
and fragmentation processes.  The classical example of an
exchangeable partition-valued processes, the exchangeable coalescent
\cite{Kingman1982}, was originally introduced in connection to certain
population genetics models; processes on $[k]^{\zz{N}}$ also arise
somewhat naturally in genetics, as discussed in \cite{Crane2012}.

The objective of this paper is to characterize exchangeable Markov
processes on $[k]^{\zz{N}}$ that do not necessarily satisfy the Feller
property, but do have cadlag sample paths.  The characterization we give
is related to that in \cite{Crane2012}, but, because the processes are
not assumed to possess the Feller property, we cannot draw general
conclusions about the infinitesimal behavior of these processes.  In
particular, the infinitesimal generator of a non-Feller process need
not exist and, therefore, in general, no characterization in terms of
infinitesimal jump rates can be given. As we will show in
Section~\ref{ssec:mean-field} below, \emph{non-Feller} exchangeable
Markov processes on $[k]^{\zz{N}}$ arise naturally as limits of
various interesting \emph{mean-field} models, including the
\emph{mean-field stochastic Ising model} (cf., e.g., \cite{peres}) and
the \emph{Reed-Frost stochastic epidemic} (cf. e.g., \cite{daley-gani},
\cite{dolgoarshinnykh-lalley}). These examples show that non-Feller exchangeable
Markov processes are by no means pathological; in many circumstances
the non-Feller case is the one of greatest {practical} interest.

By viewing elements of $\mathbb{N}$ as distinct particles, one can the
regard processes on $[k]^{\mathbb{N}}$ as interacting particle systems
in which each particle is in one of $k\geq1$ internal states at every
time.  The assumption that such a process has cadlag paths is
equivalent to the assumption that, with probability one, each particle
spends a strictly positive amount of time in each internal state it
visits. When the Feller property fails, the evolution of any subset of
particles can depend on the configuration of the entire collection
$\mathbb{N}$, but, for exchangeable processes, only via the
exchangeable $\sigma$-algebra.

The paper is organized as follows.  In Section
\ref{section:preliminaries}, we formally define exchangeable Markov
processes, present some illustrative examples, and state the
main results of the paper. In Section \ref{section:discrete-time}, we
discuss properties of the joint behavior of processes observed at an
at most countable collection of times, which includes discrete-time
Markov chains. In Sections
\ref{section:color-swatch-process}-\ref{section:construction}, we
discuss various properties of these processes in continuous time, and
give the proofs of the main Theorems \ref{thm:main},
\ref{thm:semigroup}, and \ref{thm:k2}. Finally, in
Section~\ref{section:feller}, we indicate briefly how the
representation of Theorem~\ref{thm:semigroup} specializes when the
underlying Markov process on $[k]^{\zz{N}}$ is Feller.

\noindent 
\textbf{Notation.} Elements of the state space  $[k]^{\zz{N}}$, $[k]:=\{1,\ldots,k\}$,
 will be denoted by $x= (x^{n})_{n\in \zz{N}}$, with \emph{superscripts}
indicating coordinates. Paths in $[k]^{\zz{N}}$ will be denoted by
$(x_{t})_{t\in J}$, where $J$ is a subset of $[0,\infty)$; in
particular, \emph{subscripts} will be used to denote \emph{time}. When
convenient, we use the abbreviation $x_{J}$ for the path
$(x_{t})_{t\in J}$, or similarly $x^{i}_{J}$ for the $i$th coordinate
path $(x^i_t)_{t\in J}$. A path $(x_{t})_{t\in J}$ in
$[k]^{\zz{N}}$ is said to be \emph{cadlag} if each coordinate path
$(x^{i}_{t})_{t\in J}$ is piecewise constant and
right-continuous.

\section{Main Results}\label{section:preliminaries} 

\subsection{Exchangeability}\label{ssec:preliminaries} Let $S$ be a
finite or countable set of \emph{sites}. A \emph{permutation}
of $S$ is a bijection $\sigma :S: \rightarrow S$ that fixes all but
finitely many elements of $S$. Any permutation of $S$ induces a
permutation of $[k]^{S}$ by relabeling, that is, for any element $x=
(x^{i})_{i\in S}$ of $[k]^{S}$ the image $x^{\sigma}=\sigma x$ is defined by
\[
	(\sigma x)^{i}=x^{\sigma (i)}.
\]
We extend this notation to measurable subsets $A\subset [k]^{S}$
by writing $A^{\sigma}=(\sigma A):=\{\sigma x \,:\, x\in A\}$. A random variable
$X$ taking values in the space $[k]^{S}$ is said to be
\emph{exchangeable} (or to have an \emph{exchangeable distribution})
if, for every permutation $\sigma$, the distribution of $\sigma X$
coincides with that of $X$.

 \begin{defn}\label{definition:exchangeableMP} A Markov process
$(X_{t})_{t\geq0}$ in $[k]^{S}$ (either in
discrete or continuous time) is said to be \emph{exchangeable} if
\begin{enumerate}
\item [(a)] its initial state $X_{0}$ is  exchangeable, and
\item [(b)] its transition kernel $p_{t} (x,dy)$ is invariant under
permutations $\sigma$ of $S$; that is, for every measurable subset
$A\subseteq[k]^{S}$,
\begin{equation}\label{eq:exchangeableKernel}
	p_{t} (\sigma x, \sigma A)=p_{t}(x,A)\quad\mbox{for all }t\geq0.
\end{equation}
\end{enumerate}
\end{defn}

 Recall that a transition kernel $p_{t} (x,dy)$ has the
\emph{Feller property} if, for every bounded, continuous, real-valued
function $f$ on the state space, the function
\[
	T_{t}f (x):=\int f (y)\,p_{t} (x,dy)
\]
is jointly continuous in $(t,x)$.   Our main results
provide a description of Markov processes with cadlag sample paths
whose transition kernels are exchangeable but may fail to satisfy the
Feller property; however, our description will only apply to those
Markov processes $(X_{t})_{t\geq 0}$ whose initial states $X_{0}$ are
distributed according to an exchangeable probability measure on
$[k]^{\zz{N}}$.

Except in Section \ref{ssec:mean-field}, we will be concerned
exclusively with the case $S=\zz{N}$. In this case, for any finite
subset or interval $A\subseteq [0,\infty )$ we denote by
$\mathcal{E}_{A}$ the exchangeable $\sigma$-algebra for the sequence
$X_A:=(X_A^1,X^{2}_{A},\dotsc)$, and we abbreviate
$\mathcal{E}=\mathcal{E}_{[0,\infty )}$.  In particular,
$\mathcal{E}_A$ consists of all events $E$ that are both measurable
with respect to $X_A$ and satisfy $E=E^{\sigma}$, for all permutations
$\sigma:\mathbb{N}\rightarrow\mathbb{N}$ fixing all but finitely many
$n\in\mathbb{N}$.

\subsection{Example: Mean-Field Models}\label{ssec:mean-field}

If $\{X_{t} \}_{t\geq 0}$ is an exchangeable Markov process on a
finite configuration space $[k]^{S}$ then it projects naturally to a
Markov process on the $k$-simplex $\simplexk$, via the mapping
\begin{equation}\label{eq:finite-proj}
	Y^{i}_{t} =\frac{1}{|S|} \sum_{s\in S} \mathbf{1}\{X^{s}_{t}=i \}.
\end{equation}
Moreover, the transition rates for the Markov process $\{X_{t}
\}_{t\geq 0}$ depend only on the current state of the projection
$Y_{t}$.

\begin{example}\label{example:ising}
A \emph{mean-field stochastic Ising model} (also known as
\emph{Glauber dynamics}) is an exchangeable Markov
process on the finite state space $\{-1,+1 \}^{[n]}$ whose
infinitesimal transition rates are determined by a \emph{Hamiltonian
function} $H:\{-1,+1 \}^{[n]} \rightarrow \zz{R}$ that is invariant
under permutations $\sigma :[n] \rightarrow [n]$, that is, $H\circ
\sigma =H$. Only one spin may flip at a time, and for any two
configurations $x,x'\in \{-1,+1 \}^{[n]}$ that differ in only one
coordinate the infinitesimal jump rate from $x$ to $x'$ is given by
\begin{equation}\label{eq:ising}
	P \{X_{t+dt}=x'\,|\, X_{t}=x\}=e^{-\beta H
	(x')} \,dt,
\end{equation}
where $\beta >0$ is the inverse temperature parameter.  Since $H$ is
assumed to be invariant under permutations, the Markov process with
transition rates \eqref{eq:ising} is exchangeable. In the case of
greatest physical interest, the Hamiltonian function $H$ is determined
by (i) interactions between the spins and an external field and (ii)
pair interactions between spins. In this case, the transition rates
depend only on the projections $Y_{t}$ to the simplex (that is, only
on the proportion of sites labeled $+1$). 

\end{example}

\begin{example}\label{example:reed-frost}
The continuous-time \emph{Reed-Frost epidemic model} is a simple
mean-field model of an epidemic in which members of a population $[n]$
can be either \emph{susceptible, infected,} or \emph{recovered}
(henceforth abbreviated $S,I,R$, respectively).  Each susceptible
individual becomes infected at rate proportional to the total number
currently infected; each infected individual recovers at a rate
$\varrho$ not depending on the conditions of the other individuals;
and recovered individuals are granted permanent immunity from further
infection, hence remain recovered forever and play no further role in
the propagation of the epidemic. Thus, for
certain parameters $\beta ,\varrho >0$, for each individual $i\in
[n]$,
\begin{gather}
\label{eq:reed-frost}	
	 P (X^{i}_{t+dt}=I\,|\, X^{i}_{t}=S \; \text{and}\;
	N^{I}_{t}=m) = \beta m \,dt;\\
\notag 	P (X^{i}_{t+dt}=R\,|\, X^{i}_{t}=I \;\text{and}\;
	N^{I}_{t}=m, \,N^{S}_{t}=p) =\varrho \,dt
\end{gather}
where
\[
	N^{j}_{t}= \sum_{i=1}^{n} \mathbf{1}\{X^{i}_{t}=j \} \quad \text{for}\;\; j=S,I,R.
\]
\end{example}

In both of these examples, the set $[n]$ of sites (or population members)
can be of arbitrary size, and in both cases the transition rates
extend to continuous functions on  the simplex $\simplexk$. It is
natural in circumstances such as these to inquire about large-$n$
limits. Clearly, such limits will exist only if the transition rates
are properly scaled in order that the flip rates at individual sites
remain bounded and nonzero as $n \rightarrow \infty$. Therefore, we
shall restrict attention to families of exchangeable Markov processes
on configuration spaces $[k]^{[n]}$, with $k$ fixed, whose transition rates are
obtained from a set of common (to all $n$) continuous functions
$f_{i,j}:\simplexk \rightarrow \zz{R}_{+}$ on the $k$-simplex.

\begin{defn}\label{definition:mean-field-model}
A \emph{mean-field model} with $k$ types is a family of exchangeable
Markov processes $X_{t}= (X^{i}_{t})_{i\in [n]}$ on the finite
configuration spaces $[k]^{[n]}$, one for each $n\in \zz{N}$, whose
transition rates are related in the following way: for each ordered
pair $(i,j)\in [k]$ there exists a continuous, nonnegative function
$f_{i,j}:\simplexk \rightarrow \zz{R}_{+}$ on the $k-$simplex $\simplexk$
such that for all $n\in \zz{N}$ and for each site $s\in [n]$,
\begin{equation}\label{eq:mean-field-model}
	P (X^{s}_{t+dt}=j\,|\, X_{t})= f_{i,j} (Y_{t}) \,dt \quad \text{on
	the event }\;\; X^{s}_{t}=i 
\end{equation}
where $Y_{t}$ is the projection of $X_{t}$ to $\simplexk$,  defined by
\eqref{eq:finite-proj}.
\end{defn}

\begin{prop}\label{proposition:mean-field-limits}
Consider a mean-field model with transition rates satisfying
\eqref{eq:mean-field-model}, where the functions $f_{i,j}$ are
nonnegative and Lipschitz continuous on $\simplexk$. Assume that the
initial states $X_{0}$ are exchangeable on $[k]^{[n]}$ and project to
points $Y_{0}\in \simplexk$ that converge as $n \rightarrow \infty$ to
some nonrandom point $y_{0}\in \simplexk $. Then, as $n \rightarrow
\infty$, the Markov processes $X_{t}$ on the finite configuration
spaces $[k]^{[n]}$ converge in distribution to an exchangeable Markov
process $\tilde{X}_{t}$ with cadlag sample paths on the infinite
configuration space $[k]^{\zz{N}}$.  The limit process is Feller if
and only if the rate functions $f_{i,j}$ are \emph{constant} on
$\simplexk$.
\end{prop}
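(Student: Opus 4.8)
The plan is to establish convergence in two stages: first the convergence of the empirical-measure (simplex-valued) processes $Y_t^{(n)}$ to a deterministic flow, then the convergence of the particle processes themselves to a conditionally i.i.d.\ limit. For the first stage, I would write the generator of $Y_t^{(n)}$: a jump of size $\pm e_j/n \mp e_i/n$ occurs at rate $n y^i f_{i,j}(y)$ when a type-$i$ site flips to type $j$. Summing the drift, $Y_t^{(n)}$ is a jump process whose compensated increments have mean drift $b(y):=\sum_{i,j}(e_j-e_i)\,y^i f_{i,j}(y)$ and whose quadratic variation is $O(1/n)$ per unit time. By the standard density-dependent-Markov-chain / fluid-limit argument (Kurtz's theorem; see e.g.\ Ethier--Kurtz), Lipschitz continuity of the $f_{i,j}$ guarantees that the ODE $\dot y_t = b(y_t)$, $y_0$ given, has a unique solution, and $\sup_{t\le T}|Y_t^{(n)} - y_t|\to 0$ in probability, hence $Y^{(n)}\Rightarrow (y_t)_{t\ge 0}$ in the Skorohod space of $\simplexk$-valued cadlag paths, the limit being the deterministic path $(y_t)$.

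For the second stage, fix finitely many sites, say $[m]$. Conditionally on the full configuration, the coordinate $X^s_t$ evolves as a time-inhomogeneous Markov chain on $[k]$ with instantaneous rate matrix $Q(Y^{(n)}_t)$, where $Q(y)_{ij}=f_{i,j}(y)$ for $i\ne j$. Since $Y^{(n)}_t$ is asymptotically the nonrandom path $y_t$, in the limit the coordinates decouple: each $\tilde X^s$ should be an independent copy of the time-inhomogeneous $[k]$-valued Markov chain driven by the frozen environment $(y_t)_{t\ge 0}$, started from a site whose initial law is the $y_0$-distribution on $[k]$ (using de Finetti for the exchangeable initial states together with $Y_0^{(n)}\to y_0$). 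To make this rigorous I would use the martingale problem: a finite collection of coordinate functionals of $X^{(n)}$, together with a test function of $Y^{(n)}$, solves an approximate martingale problem whose coefficients converge uniformly (by Lipschitz continuity and Stage 1) to the coefficients of the limiting martingale problem for $m$ independent chains in the environment $(y_t)$; tightness of $X^{(n)}$ on the product Skorohod space is immediate from compactness of $[k]^{\zz N}$ plus the fact that each coordinate has jump rates bounded by $\max_{i,j}\|f_{i,j}\|_\infty$, so every subsequential limit solves the limiting martingale problem, which has a unique solution. Consistency over $m$ then identifies the limit $\tilde X$ as the exchangeable process on $[k]^{\zz N}$ obtained as an i.i.d.\ mixture (trivial mixture, since $(y_t)$ is deterministic) of copies of this time-inhomogeneous chain; its sample paths are cadlag because each coordinate is a finite-rate pure-jump chain.

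Finally, the Feller dichotomy. If the $f_{i,j}$ are constant, the limit process is exactly an i.i.d.\ family of honest time-homogeneous continuous-time Markov chains on $[k]$, which is manifestly Feller (its transition semigroup is a countable product of a fixed finite-state Feller semigroup, and finite products preserve the Feller property on the product-discrete topology); alternatively, one can invoke the characterization recalled in the introduction, that Feller $=$ consistent under subsampling, and constant rates make the projection to $[k]^{[n]}$ Markov for every $n$. Conversely, suppose the limit is Feller. Then by the subsampling characterization its projection to $[k]^{[1]}=[k]$ is a (time-homogeneous) Markov process, so the one-site flip rate must be a deterministic function of the single coordinate alone; but in the limit that rate is $f_{i,j}(y_t)$ evaluated along the environment, and one can choose initial points $y_0$ so that the orbit $(y_t)$ visits a neighborhood of any prescribed $y\in\simplexk$ — or, more simply, vary $y_0$ over all of $\simplexk$ and read off $f_{i,j}(y_0)$ at time $0$ — forcing $f_{i,j}$ to be independent of $y$, i.e.\ constant.

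The main obstacle I expect is not Stage 1 (a routine fluid limit) but the clean identification of the limit in Stage 2: one must argue that the asymptotically-deterministic environment genuinely decouples the coordinates and rule out residual correlations through the exchangeable $\sigma$-algebra. The martingale-problem / propagation-of-chaos bookkeeping — verifying uniform convergence of the approximate generators and uniqueness of the limiting (time-inhomogeneous, infinite-dimensional but product-form) martingale problem — is where the real work lies, and it is also the point at which the Lipschitz hypothesis on $f_{i,j}$ (as opposed to mere continuity) is used, via Stage 1.
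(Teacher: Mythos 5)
Your proposal is correct in outline and shares the paper's two-stage skeleton, but it implements the second stage by a genuinely different device. Stage 1 is the same: the paper also invokes Kurtz's density-dependent limit theorem (Theorem 2.1 of Ethier--Kurtz, Chapter 11) to get uniform convergence in probability of $Y^{(n)}$ to the solution $(y_t)_{t\geq0}$ of the ODE, with the Lipschitz hypothesis used exactly where you use it, for existence/uniqueness of the flow (the paper writes this out only for $k=2$, remarking that general $k$ is routine, whereas you work directly with the drift $b(y)=\sum_{i\neq j}(e_j-e_i)y^if_{i,j}(y)$). For Stage 2, where you propose a martingale-problem/propagation-of-chaos argument (tightness from bounded jump rates and compactness, uniform convergence of approximate generators, uniqueness of the limiting product-form inhomogeneous martingale problem), the paper instead uses an explicit coupling: both the finite-$n$ processes and the limit are built from the same independent Poisson clocks $N^s_{i,j}$ with rates $\lambda_{i,j}=\max_y f_{i,j}(y)$ and uniform thinning variables, with acceptance probability $f_{i,j}(Y_\tau)/\lambda_{i,j}$ at stage $n$ and $f_{i,j}(y_\tau)/\lambda_{i,j}$ in the limit; convergence of the environment then gives joint convergence of any finite set of coordinates essentially for free. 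The coupling route avoids the generator bookkeeping that you correctly identify as the hard part of your route; your route is more generic and does not require constructing the limit on the same probability space. For the Feller dichotomy, the paper disposes of the converse in one sentence (non-constant rates make the transition kernel depend non-trivially on the exchangeable $\sigma$-algebra, hence discontinuously on the state in the product topology), while your argument via the Feller--consistency equivalence recalled in the introduction, reading off $f_{i,j}(y_0)$ from the instantaneous single-site rate as $y_0$ varies, is a valid and somewhat more explicit alternative.

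One point you leave implicit that the paper treats explicitly: verifying that the limit $\tilde X$ is a \emph{time-homogeneous Markov process on} $[k]^{\zz{N}}$, not merely an i.i.d.\ array of inhomogeneous chains. This needs two observations: by the SLLN the empirical frequency of $\tilde X_t$ equals $y_t$ almost surely for each $t$, so the current environment is recoverable from the current configuration; and uniqueness of ODE solutions (Lipschitz again) makes $(y_t)$ a flow, so the conditional law of the future given $\tilde X_s=x$ depends on $(s,x)$ only through $x$. Without this, your identification of the limit law does not yet yield the Markov assertion in the statement; with it, your argument is complete.
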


The weak convergence of the Markov processes $X_{t}$ under these
hypotheses, at least in the case $k=2$, follows easily from classical
results of Kurtz \cite{kurtz} concerning the large-$n$ behavior of
density-dependent Markov processes. The general case $k\geq 2$ follows
by obvious and routine extensions of Kurtz's results.  Since the
important point is not the weak convergence but rather the fact that
the limit processes will, in general, be non-Feller, we shall only
sketch the proof in the special case $[k]=\{0,1 \}$.

\begin{proof}
[Proof of Proposition~\ref{proposition:mean-field-limits}]
Assume that $[k]=\{0,1 \}$. In this case, the simplex $\simplexk$ may
be identified with the unit interval $[0,1]$,  the rate functions 
$f_{0,1}$ and $f_{1,0}$ in
Definition~\ref{definition:mean-field-model} may be viewed as
functions on $[0,1]$ and 
\[
	Y_{t}=Y^{1}_{t}=\frac{1}{n}\sum_{s=1}^{n}\mathbf{1}\{X^{s}_{t}=1
	\}. 
\]
The Markov process $(Y_{t})_{t\in [0,1]}$ is an instance of what
Ethier and Kurtz (\cite{ethier-kurtz}, Chapter~11) call a
\emph{density-dependent} Markov process.  By Theorem~2.1 of
\cite{ethier-kurtz}, Chapter~11 (see also \cite{kurtz}), if the initial
states $Y_{0}$ converge to a point $y_0\in[0,1]$ as $n \rightarrow \infty$, 
then the processes $(Y_{t})_{t\geq 0}$ converge in
probability (on each finite time interval $[0,T]$) to a deterministic
trajectory $(y_{t})_{t\geq 0}$, where $(y_{t})_{t\geq 0}$ is the
unique solution to the differential equation
\begin{equation}\label{eq:ode}
	\frac{dy}{dt}=f_{0,1} (y_{t}) (1-y_{t})-f_{1,0} (y_{t})y_{t}
\end{equation}
with initial condition $y_{0}$. (Note: The hypothesis that the
functions $f_{i,j}$ are Lipschitz continuous is needed to guarantee
that solutions to the differential equation exist and are unique --
see condition (2.1) of \cite{ethier-kurtz}, Chapter~11.) Thus, for each
$T<\infty$ and $\varepsilon >0$,
\begin{equation}\label{eq:density-dependent-convergence}
	\lim_{n \rightarrow \infty }P\{\max_{0\leq t\leq
	T}\xnorm{Y_{t}-y_{t}}\geq \varepsilon\} =0.
\end{equation}

Now consider the Markov processes $(X_{t})_{t\geq 0}$ on the state
spaces $[k]^{[n]}$. These can be constructed using independent
auxiliary Poisson processes $N^{s}_{i,j} (t)$ with rates
$\lambda_{i,j}=\max_{y\in [0,1]}f_{i,j} (y)$, one for each $s\in
\zz{N}$, and i.i.d. sequences of Uniform-[0,1] random variables
$U^{s}_{i,j} (m)$, according to the following rule: at the $m$th
occurrence time $\tau^{s}_{i,j} (m) $ of $N^{s}_{i,j}$, if site $s$
is currently of color $i$ then it flips to color $j$ if and only if
\[
	 U^{s}_{i,j} (m)\leq \frac{f_{i,j} (Y_{\tau^{s}_{i,j}
	 (m)})}{\lambda_{i,j}}. 
\]
Since the rate functions $f_{i,j}$ are continuous, it follows from
\eqref{eq:density-dependent-convergence} that 
\[
	\lim_{n \rightarrow \infty} f_{i,j} (Y_{\tau^{s}_{i,j}
	 (m)})=f_{i,j} (y_{\tau^{s}_{i,j}
	 (m)}). 
\]
Consequently, for each $K<\infty$ and each $T<\infty$, 
the processes $(X_{t}^{s})_{t\leq T,s\in [K]}$
converge jointly in law to the process $(\tilde{X}^{s}_{t})_{t\leq
T,s\in [K]}$  whose coordinate processes $(\tilde{X}^{s}_{t})_{t\leq
T}$ are independent, time-\emph{inhomogeneous} Markov processes
governed by the following rule: at the $m$th
occurrence time $\tau^{s}_{i,j} (m) $ of $N^{s}_{i,j}$, if site $s$
is currently of color $i$ then it flips to color $j$ if and only if
\begin{equation}\label{eq:limit-rules}
	 U^{s}_{i,j} (m)\leq \frac{f_{i,j} (y_{\tau^{s}_{i,j}
	 (m)})}{\lambda_{i,j}}. 
\end{equation}
This proves the weak convergence assertion of the proposition.

It remains to show that the limit process $(\tilde{X}_{t})_{t\geq 0}$
has the required properties. It is obvious that the sample paths of
$(\tilde{X}_{t})_{t\geq 0}$ are cadlag, and since the coordinate
processes are conditionally i.i.d.\ given the initial value $y_0$, it
follows that $(\tilde{X}_{t})_{t\geq 0}$ is exchangeable. It remains to
show that $(\tilde{X}_{t})_{t\geq 0}$ is a Markov process.  
 First, observe that the differential equations
\eqref{eq:ode} and the SLLN imply that for each time $t\geq 0$, with
probability one, the limiting  fraction of coordinates of
$\tilde{X}_{t}$ that have color $1$ exists and equals $y_{t}$, i.e.,
\[
	\lim_{n \rightarrow
	\infty}n^{-1}\sum_{s=1}^{n}\mathbf{1}\{\tilde{X}^{s}_{t}=1 \} =y_{t}.
\]
Since the process $(\tilde{X}_{t})_{t\geq 0}$ is built using
independent Poisson processes and Uniform-[0,1] random variables, and
since \eqref{eq:ode} defines a flow on $[0,1]$ (recall that the
functions $f_{i,j}$ are assumed to be Lipschitz, so solutions of
\eqref{eq:ode} are unique), it follows that $(\tilde{X}_{t})_{t\geq
0}$ is a Markov process on $[k]^{\zz{N}}$ with time-homogeneous
transition kernel. Finally, the Feller property can only hold if the
functions $f_{i,j}$ are constant, because otherwise the transition
probabilities will depend in a non-trivial way on the exchangeable
$\sigma$-algebra. 
\end{proof}                           

\begin{rmk}\label{remark:mean-field-model}
The structure of the limit process $(\tilde{X}_{t})_{t\geq 0}$ in
Proposition~\ref{proposition:mean-field-limits}  provides an
interesting example of a phenomenon that holds generally for
exchangeable Markov processes on $[k]^{\zz{N}}$, as we will show in
Theorem~\ref{thm:semigroup}: although the Markov process is
time-homogeneous, the coordinate processes $\tilde{X}^{s}_{t}$ are,
\emph{conditional on the exchangeable $\sigma$-algebra},
time-\emph{inhomogeneous} Markov processes.
\end{rmk}

\subsection{Failure of the Feller Property}\label{ssec:failure}
For exchangeable Markov processes without the Feller
property, the transition mechanism may depend in a nontrivial way on
its exchangeable $\sigma$-field, and so there is a greater variety of
possible behaviors than for Feller processes. Moreover,
in the absence of the Feller property, there need not exist an infinitesimal
generator (at least in the usual sense, as a densely defined linear
operator on a space of bounded, continuous functions), and so
infinitesimal jump rates need not exist. The following example
illustrates the difficulty.

\begin{example}\label{example:singularJumpRate}
Let\footnote{When $k=2$ we write $[k]=\{0,1 \}$ rather than
$[k]=\{1,2 \}$.} $[k]=\{0,1 \}$, and let $f:[0,1]\rightarrow [0,1]$ be
any increasing homeomorphism of the unit interval. Let
$U_{1},U_{2},\dotsc$ be independent, identically distributed (i.i.d.)
random variables with the Uniform distribution on $[0,1]$. For $i\in
\zz{N}$ and $0\leq t\leq 1$,  define
\begin{align*}
	X^{i}_{t}&=1 \quad\text{if} \;\; U_{i}\leq f (t),\\
	&=0 \quad\text{if} \;\; U_{i}> f (t).
\end{align*}
The process $(X_{t})_{0\leq t\leq1}$ is Markov and has cadlag sample
paths. Furthermore, $(X_t)_{t\geq0}$ is exchangeable. However, if the
homeomorphism $f$ has the property that its derivative is $0$ almost
everywhere (as would be the case if $f$ were the cumulative
distribution function of a singular probability measure on $[0,1]$
with dense support) then the jumps of $X_{t}$ would occur only at
times in a set of Lebesgue measure $0$.
\end{example}

Although the behavior of the above Markov process is, in certain
respects, pathological, the example nevertheless suggests what can
happen in general.  By the Glivenko-Cantelli Theorem, for each $t\geq
0$ the limiting frequency of 1s in the configuration $X_{t}$ exists
with probability one.  Consequently, for each $t$ there is a well-defined
projection $Y_{t}=\pi (X_{t})$ of the Markov process $X_{t}$ to the
$2$-simplex $[0,1]$. In Example~\ref{example:singularJumpRate}, the
projection $(Y_{t})_{t\geq0}$ is a purely deterministic function of $t$ that
follows an increasing, continuous, but not everywhere differentiable
trajectory from $0$ to $1$. Our first theorem shows that an
analogous projection exists in general, and furthermore that it exists
\emph{simultaneously for all $t$}, except possibly on a set of
probability zero.

\begin{thm}\label{thm:main}
Let $(X_{t})_{t\geq0}$ be an exchangeable
Markov process on $[k]^{\zz{N}}$ with cadlag sample paths. Then, with
probability one, for every $t\geq 0$ and every $j\in [k]$, the limiting
frequency 
\begin{equation}\label{eq:limFreqs}
	Y_t^j:=\lim_{n \rightarrow
	\infty}n^{-1}\sum_{m=1}^{n}\mathbf{1}\{X^{m}_{t} =j\}
\end{equation}
exists, and the vector $Y_{t}= (Y^{j}_{t})_{j\in [k]}$ is an element
of the $k$-simplex $\simplexk$ of probability distributions on
$[k]$. The process $(Y_{t})_{t\geq 0}$ is Markov and has cadlag sample
paths. Furthermore, the sample paths of $(Y_{t})_{t\geq 0}$ are of
locally bounded variation with respect to the $L^{1}$ metric on the
simplex $\simplexk$. 
\end{thm}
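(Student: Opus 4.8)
The plan is to build the proof around a countable dense set of times together with a martingale/exchangeability argument that upgrades pointwise statements to simultaneous ones. First I would fix a countable dense set $D\subset[0,\infty)$ (say the dyadic rationals). For each fixed $t$, the random configuration $X_t\in[k]^{\zz{N}}$ is exchangeable, so by de Finetti's theorem (equivalently the Glivenko--Cantelli/SLLN argument sketched before Theorem~\ref{thm:main}) the limiting frequency $Y_t^j$ in \eqref{eq:limFreqs} exists almost surely and $Y_t\in\simplexk$. Taking a countable intersection over $t\in D$ and $j\in[k]$, there is a single event of probability one on which $Y_t$ is well defined for all $t\in D$. The first real task is then to show that $t\mapsto Y_t$, defined so far only on $D$, extends to a cadlag function of $t$ on all of $[0,\infty)$, and that along the way the limit in \eqref{eq:limFreqs} exists \emph{simultaneously} for all $t$, not just on $D$.

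For this extension I would exploit the cadlag structure of the coordinate paths. Fix a coordinate $m$; the path $t\mapsto X_t^m$ is piecewise constant and right-continuous, so $\mathbf{1}\{X_t^m=j\}$ is right-continuous in $t$ with left limits. The key quantitative input is a bound on the total variation of the empirical frequencies. For each $n$ let $Y_t^{(n),j}=n^{-1}\sum_{m=1}^n\mathbf 1\{X_t^m=j\}$. I would estimate the expected number of color changes among the first $n$ sites in a time interval $[0,T]$: by exchangeability this is $n$ times the expected number of jumps of the single coordinate path $(X_t^1)_{t\le T}$, and the cadlag hypothesis (each site spends positive time in each state it visits) combined with a first-moment argument should give that $E[\,\#\{\text{jumps of }X^1\text{ in }[0,T]\}\,]<\infty$; call this bound $C_T$. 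Then the expected $L^1$ total variation of $(Y_t^{(n)})_{t\le T}$ is at most $C_T$, \emph{uniformly in $n$}. This uniform bound lets me (i) pass to the limit to conclude that $(Y_t)_{t\in D}$ has total variation at most $C_T$ on $[0,T]\cap D$, hence extends to a function of locally bounded variation on all of $[0,\infty)$, and (ii) conclude that $Y_t^{(n)}\to Y_t$ uniformly enough that the limit \eqref{eq:limFreqs} holds simultaneously for all $t$ on an event of probability one --- roughly, oscillations of $Y^{(n)}-Y$ over short intervals are controlled by tail sums of per-site jump counts, which are small uniformly by dominated convergence. Right-continuity of $Y$ then follows from right-continuity of each $Y_t^{(n)}$ together with this uniform control; the existence of left limits follows from the bounded-variation bound.

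Finally, to see that $(Y_t)_{t\ge 0}$ is itself Markov: $Y_t=\pi(X_t)$ is a deterministic measurable function of $X_t$, and the exchangeability of the transition kernel \eqref{eq:exchangeableKernel} means that $p_t(x,\cdot)$ depends on $x$ only through exchangeable features of $x$; in particular the law of $\pi(X_{t+s})$ given $X_t$ depends only on $\pi(X_t)$, so $(Y_t)$ inherits the Markov property with a time-homogeneous kernel. (This step is the ``soft'' part, modulo care about null sets where $\pi$ is undefined, which the simultaneous-existence result already handles.) I expect the main obstacle to be step (ii): turning the $L^1$-total-variation bound, which is an $L^1$-in-expectation statement about the empirical processes $Y^{(n)}$, into an almost-sure statement valid simultaneously for \emph{all} $t$ including the exceptional times in $[0,\infty)\setminus D$. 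The resolution is that the total variation functional is lower semicontinuous and the approximating processes $Y^{(n)}$ are genuinely cadlag with a common expected-total-variation bound, so a Borel--Cantelli argument over dyadic scales controls the modulus of continuity of the limiting frequency uniformly in $n$; but making this rigorous --- in particular ruling out ``hidden'' jumps of $Y$ at non-dyadic times and showing $Y$ has no oscillatory discontinuities --- is where the work lies.
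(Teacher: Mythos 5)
The pivotal quantitative step in your plan --- that the cadlag hypothesis plus ``a first-moment argument'' yields $E\bigl[\#\{\text{jumps of }X^{1}\text{ in }[0,T]\}\bigr]=C_{T}<\infty$ --- has a genuine gap: cadlag paths give only that the number of jumps of $X^{1}$ in $[0,T]$ is \emph{almost surely finite}, which does not imply finite expectation, and nothing in the hypotheses forces it. For instance, take $k=2$, let the initial frequency $Y_{0}$ have a law under which some rate function $g(Y_{0})$ has infinite mean, and, conditionally on $Y_{0}=y$, let the coordinates flip back and forth independently at rate $g(y)$ with the two flip rates balanced so that $Y_{t}\equiv y$; this is an exchangeable Markov process with cadlag paths (the transition kernel depends on $x$ only through the extended projection, as in Example~\ref{example:nonFellerProj}), yet $E[\#\{\text{jumps of }X^{1}\text{ in }[0,1]\}]=\infty$. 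So no uniform-in-$n$ bound $C_{T}$ on the expected total variation of $Y^{(n)}$ exists, and both of your steps (i) and (ii) collapse. The same example exposes a second, structural problem: the total variation of $Y^{(n)}$ (which is essentially $2n^{-1}\sum_{m\leq n}\#\{\text{jumps of }X^{m}\}$ when jump times are distinct) can tend to infinity even though the limit $Y$ is constant, so bounded variation of $Y$ simply cannot be obtained by bounding $\limsup_{n}\mathrm{TV}(Y^{(n)})$; the finiteness asserted in Theorem~\ref{thm:main} is not a consequence of coordinates jumping rarely on average.

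The paper's route avoids moments entirely and is worth comparing with. For simultaneous existence of the limits \eqref{eq:limFreqs} and the cadlag property of $Y$, it conditions on the exchangeable $\sigma$-algebra via Hewitt--Savage applied to the coordinate paths viewed as random elements of the Skorohod space of ``color swatches'' (Proposition~\ref{proposition:colorSwatchSequence}), reducing to an i.i.d.\ sequence of cadlag paths; then (Proposition~\ref{proposition:empiricalFrequencies}) it shows the set $D$ of fixed discontinuity times is countable with each $D_{\varepsilon}$ finite, partitions $[0,1]\setminus D_{\varepsilon}$ into countably many intervals on which the probability of a jump is below $\varepsilon$, and uses the SLLN interval by interval to pin the oscillation of the upper and lower empirical limits to $2\varepsilon$ --- no integrability of jump counts is needed. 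For bounded variation (Proposition~\ref{proposition:bv}) the argument runs in the opposite direction from yours: assuming the mass-transfer process had unbounded variation, the conditional description of a single coordinate as a time-inhomogeneous Markov chain with transition matrices $Q_{s,t}$ (Corollary~\ref{corollary:IH-description}), together with products of the form $\prod_{j}\bigl(1-Q_{s_{ij},t_{ij}}(a,b)\bigr)\leq e^{-M}$ and, for $k>2$, a pigeonhole over starting colors, forces $X^{1}$ to have infinitely many discontinuities with positive probability, contradicting cadlag paths. Your Markov-property argument for $(Y_{t})_{t\geq0}$ is essentially the paper's (Corollary~\ref{corollary:discreteProj}) and is fine, but the heart of the theorem --- simultaneous existence and bounded variation --- needs the conditional i.i.d.\ structure rather than an expected-jump-count bound.
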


Except for the last -- and most interesting -- assertion, that the
sample paths of $Y_{t}$ are of locally bounded variation,
Theorem~\ref{thm:main} will be proved in
Sections~\ref{section:discrete-time}-\ref{section:color-swatch-process}:
see Corollary~\ref{corollary:cadlagProjections}. That the sample paths
are of locally bounded variation will be proved in
Section~\ref{sec:bv}. The converse assertion -- that every Markov
process on the simplex $\simplexk$ whose paths are cadlag and of
locally bounded variation is the projection of an exchangeable Markov
process on $[k]^{\zz{N}}$ -- is the content of Theorem~\ref{thm:k2}
below. 

\begin{example}\label{example:nonFellerProj}
This example shows that, in general, the projection $(Y_{t})_{t\geq0}$
need not be Feller. Let $[k]=\{0,1 \}$; thus, the simplex $\simplexk$
can be identified with the unit interval $[0,1]$. The transition
mechanism of the process $(X_{t})_{t\geq 0}$ will involve a sequence
$\tau_{1},\tau_{2},\dotsc$ of i.i.d.\ exponential random variables
with mean $1$, independent of the initial state, but will also
involve the projection $Y_{0}$ of the initial state. In order that
this be well-defined for all elements $x\in [k]^{\zz{N}}$, we extend
the definition of the projection as follows:
\begin{equation}\label{eq:extended-proj}
	Y_{0}:=\liminf_{n \rightarrow
	\infty}n^{-1}\sum_{m=1}^{n}\mathbf{1} \{X^{m}_{0}=1 \}.
\end{equation}
Given the initial state $X_{0}$ and the auxiliary random variables
$\tau_{j}$, define
\begin{align*}
	X^{i}_{t}&= \mathbf{1}_{[\frac{1}{2} ,1]} (Y_{0}) &\text{if} \;\; t&\geq \tau_{i},\\
	&=X^{i}_{0}  &\text{if} \;\; t&< \tau_{i}.
\end{align*}
If the initial state $X_{0}$ is exchangeable, then clearly the process
$(X_{t})_{t\geq 0}$ is Markov and exchangeable. Its projection $Y_{t}$
to the simplex $[0,1]$ follows a deterministic trajectory
$t\mapsto \gamma (Y_{0}, t)$, and, for each $Y_{0}\in (0,1)$, this trajectory
is continuous. However, the trajectories do not fit together to make a
(jointly) continuous flow: in particular, for every $t>0$ the function
$Y_{0}\mapsto \gamma (Y_{0}, t)$ is discontinuous at $Y_{0}=1/2$, and
therefore, the projection $Y_{t}$ is not Feller.
\end{example}

\begin{example}\label{example:nonUniqueness}
This example shows that in general the law of the projection
$(Y_{t})_{t\geq 0}$ does not uniquely determine the law of the
covering process $(X_{t})_{t\geq 0}$. The process $Y_{t}$ will be a
two-state chain that jumps back and forth between $p=1/3$ and $q=2/3$
at the occurrence times $T_{1}<T_{2}<\dotsb$ of a rate-1 Poisson
process.  We exhibit two exchangeable Markov processes on $\{0,1
\}^{\zz{N}}$, denoted $X_{t}$ and $Z_{t}$, both of which project to
$Y_{t}$. The first process $X_{t}$ evolves as follows: for each
$n=1,2,\dotsc $, (a) if $Y_{T_{n}-}=2/3$ then all of the sites $i$
for which $X^{i}_{T_{n}-}=1$ simultaneously toss fair coins to determine
their colors at time $T_{n}$, and (b) if $Y_{T_{n}-}=1/3$ then all of
the sites $i$ for which $X^{i}_{T_{n}-}=0$ simultaneously toss fair coins
to determine their colors at time $T_{n}$. The second process $Z_{t}$
evolves even more simply: at each time $T_{n}$, \emph{all} sites,
regardless of their current colors, toss $(1-Y_{T_{n}})$-coins to
determine their colors at time $T_{n}$.
\end{example}

\begin{example}\label{example:nonUniqueness-Feller}
Even if the Markov process $(X_t)_{t\geq0}$ is Feller, its law need not
be determined by its projection $(Y_t)_{t\geq0}$.
Take $[k]=\{0,1 \}$ and consider the degenerate process
$Y_t=(p,1-p)$ for all $t\geq0$, where $p<1/2$.  Then $(X_t)_{t\geq0}$ can
evolve either by
\begin{itemize}
	\item[(a)] remaining constant for all $t\geq0$; or \item[(b)]
at the occurrence times $T_1<T_2<\cdots$ of a rate-1 Poisson process, all of
the sites $i$ for which $X_{T_n-}^i=0$ flip   to $X_{T_n}^i=1$, while
each site $i$ with $X_{T_n-}^i=1$ flips to $X_{T_n}^i=0$ with
probability $p/ (1-p)$.
\end{itemize}
Observe that in both of these evolutions the projection
$(Y_{t})_{t\geq 0}$ into $\simplexk$ is degenerate at $(p,1-p)$.
\end{example}

The preceding  examples indicate that additional information is required to
\emph{uniquely} determine the law of the covering Markov process.
Because the paths of $Y_{t}$ are not necessarily differentiable, this
information cannot be encapsulated by infinitesimal rates. Thus, our
specification will involve an associated time-inhomogeneous Markov
semigroup on $[k]$. 

\begin{defn}\label{defn:cocycle}
Given a Markov process $\{Y_{t} \}_{t\geq 0}$ valued in the simplex
$\simplexk$, a \emph{compatible random semigroup} is a two-parameter family
$(Q_{s,t})_{0\leq s\leq t}$ of random $k\times k$ stochastic matrices that
satisfy the \emph{cocycle relation}
\begin{equation}\label{eq:cocycle}
	Q_{r,t}=Q_{r,s}Q_{s,t} \quad \text{for all} \;\; r\leq s\leq
	t\quad\text{a.s.},
\end{equation}
and the \emph{compatibility relations}
\begin{equation}\label{eq:q-y}
	Y_{s}Q_{s,t}=Y_{t}\quad\text{for all }s\leq t\quad\mbox{a.s.}
\end{equation}
\end{defn}

\begin{thm}\label{thm:semigroup}
Let $(X_{t})_{t\geq0}$ be an exchangeable
Markov process on $[k]^{\zz{N}}$ with cadlag sample paths, and let
$\{Y_{t} \}_{t\geq 0}$ be its projection to the simplex $\simplexk$. 
Then there exists a compatible random semigroup $(Q_{s,t})_{0\leq s\leq t}$
such that (i) for each pair $s\leq t$ the random  matrix
$Q_{s,t}$ is measurable with respect to the exchangeable 
$\sigma$-algebra $\mathcal{E}_{\{s,t\}}$; and (ii) conditional on the
exchangeable $\sigma$-algebra $\mathcal{E}_{[0,\infty )}$, the
coordinate processes $X^{i}_{t}$ are independent realizations of a
\emph{time-inhomogeneous} Markov chain on $[k]$ with transition probability
matrices $Q_{s,t}$, that is,
\begin{equation}\label{eq:IH-MC}
	P (X^{i}_{t}=b \,|\, X^{i}_{s}=a; \, \mathcal{E}_{[0,\infty
	)})=Q_{s,t} (a,b). 
\end{equation}
\end{thm}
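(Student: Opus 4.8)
The plan is to construct the matrices $Q_{s,t}$ from empirical color frequencies and to read off both the cocycle property and the conditional Markov property from the interplay of de Finetti's theorem with the permutation invariance of the transition kernel. For times $t_1\le\cdots\le t_m$ the $[k]^m$-valued sequence $((X^i_{t_1},\dots,X^i_{t_m}))_{i\in\zz{N}}$ is exchangeable, so by de Finetti's theorem and the strong law of large numbers the empirical frequencies $\nu_{t_1,\dots,t_m}(a_1,\dots,a_m):=\lim_n n^{-1}\#\{i\le n:X^i_{t_\ell}=a_\ell\ \text{for all}\ \ell\}$ exist almost surely, are $\mathcal{E}_{\{t_1,\dots,t_m\}}$-measurable, and are a version of the conditional law of $(X^i_{t_1},\dots,X^i_{t_m})$ given $\mathcal{E}_{\{t_1,\dots,t_m\}}$ (equivalently, being $\mathcal{E}$-measurable, given $\mathcal{E}$). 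In particular $\nu_s=Y_s$ by Theorem~\ref{thm:main}, and $\nu_{s,t}$ has marginals $Y_s$ and $Y_t$. I would then set $Q_{s,t}(a,b):=\nu_{s,t}(a,b)/Y_s(a)$ on $\{Y_s(a)>0\}$ and use an arbitrary fixed measurable convention (say $Q_{s,t}(a,\cdot):=Y_t$) otherwise; then $Q_{s,t}$ is a stochastic matrix, measurable with respect to $\mathcal{E}_{\{s,t\}}$, and the compatibility relation \eqref{eq:q-y} holds because $\sum_a\nu_{s,t}(a,b)=Y_t(b)$.

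The heart of the argument is the almost sure identity, for $r\le s\le t$,
\[
  \nu_{r,s,t}(a,b,c)=\nu_{r,s}(a,b)\,Q_{s,t}(b,c),
\]
and, by iteration, the factorization $\nu_{t_1,\dots,t_m}(a_1,\dots,a_m)=Y_{t_1}(a_1)\prod_{\ell<m}Q_{t_\ell,t_{\ell+1}}(a_\ell,a_{\ell+1})$. To prove it I would condition on $\mathcal{F}_s:=\sigma(X_u:u\le s)$. By the Markov property the conditional law of $(X_u)_{u\ge s}$ is $P_{X_s}$, and by permutation invariance of the kernel $P_x$ is invariant under every permutation fixing $x$, i.e.\ under every permutation acting within the color classes of $x$. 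Hence, conditional on $\mathcal{F}_s$, for each color $b$ with $Y_s(b)>0$ the future coordinate paths indexed by the infinite class $\{i:X^i_s=b\}$ form an exchangeable sequence, and this exchangeability mixes sites freely regardless of their earlier colors $X^i_r$. Applying de Finetti within this class, those futures are conditionally i.i.d.\ given $\mathcal{F}_s$ and the class's own exchangeable $\sigma$-algebra, so each such site independently satisfies $X^i_t=c$ with one and the same random probability $q_b(c)$. A law of large numbers along $\{i:X^i_s=b\}$ gives $q_b(c)=\nu_{s,t}(b,c)/Y_s(b)=Q_{s,t}(b,c)$, and the same law of large numbers along the sub-class $\{i:X^i_s=b,\ X^i_r=a\}$ gives $\nu_{r,s,t}(a,b,c)=q_b(c)\,\nu_{r,s}(a,b)$; the degenerate cases $Y_s(b)=0$ or $\nu_{r,s}(a,b)=0$ are trivial. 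Summing the three-time identity over $b$ yields the cocycle relation \eqref{eq:cocycle}.

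It remains to assemble the conditional i.i.d.\ statement and to pass to all pairs $s\le t$ at once. De Finetti applied to the path-valued exchangeable sequence $(X^i_{[0,\infty)})_i$ shows that, conditional on $\mathcal{E}$, the coordinate paths are i.i.d.\ with a random law $\mu$ on the Skorokhod space; the previous step identifies the finite-dimensional distributions of $\mu$ with the $\nu_{t_1,\dots,t_m}$, which factor through $(Q_{s,t})$, so $\mu$ is almost surely the law of a time-inhomogeneous Markov chain with transition matrices $Q_{s,t}$, and \eqref{eq:IH-MC} follows (vacuously, under the convention above, on null color classes). To obtain all of this simultaneously for every $s\le t$ I would carry out the preceding steps on a fixed countable dense set $D\subset[0,\infty)$, where all the almost sure statements hold at once, and then extend: for $D\ni s_n\downarrow s$ and $t_n\downarrow t$, coordinatewise right-continuity forces $X^i_{s_n}=X^i_s$ and $X^i_{t_n}=X^i_t$ for all large $n$, and---arguing as in the proof of Theorem~\ref{thm:main}, via the color-swatch process of Section~\ref{section:color-swatch-process}---one defines $Q_{s,t}$ as the resulting limit of the $Q_{s_n,t_n}$ and transfers \eqref{eq:cocycle}, \eqref{eq:q-y}, the $\mathcal{E}_{\{s,t\}}$-measurability and \eqref{eq:IH-MC} to the limit.

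I expect the main obstacle to be the second step. The temptation is to condition only on $\mathcal{F}_s$, but that produces identities for $E[\cdot\mid\mathcal{F}_s]$ rather than almost sure identities; the fix is to notice that, given $X_s$, the future decomposes over color classes on each of which the kernel is \emph{fully} exchangeable, so that de Finetti applies within a single class and the class-wide empirical frequency---which does not see the pre-$s$ colors---is exactly what governs the future. A secondary nuisance is the uniform-in-$i$ control required to pass from the countable skeleton $D$ to all times; this is precisely where the cadlag hypothesis, and the restriction it forces on how jumps of an exchangeable process can accumulate (cf.\ Theorem~\ref{thm:main}), is indispensable.
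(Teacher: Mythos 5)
Your proposal is correct and, at the level of overall architecture, it is the paper's own route: you define $Q_{s,t}$ by the empirical pair frequencies \eqref{eq:limitFreqsPairs} (Corollary~\ref{corollary:stochasticMatrixProj}), you obtain the conditional i.i.d.\ structure of the coordinate paths from de Finetti/Hewitt--Savage applied to the path-valued sequence (Proposition~\ref{proposition:colorSwatchSequence}), you establish the time-inhomogeneous Markov factorization of the finite-dimensional empirical frequencies (the content of Proposition~\ref{proposition:conditional-IH}), and you pass from a countable dense set of times to all times using the cadlag hypothesis, exactly as the paper does with dyadic rationals in Corollary~\ref{corollary:IH-description}. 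The one place where you genuinely diverge is the proof of the key factorization $\nu_{r,s,t}(a,b,c)=\nu_{r,s}(a,b)\,Q_{s,t}(b,c)$: the paper argues this in Proposition~\ref{proposition:conditional-IH} by a regeneration argument (given the past, draw a random stochastic matrix whose law depends only on $Y_{s}$ and recolor coordinates conditionally independently), whereas you condition on $\mathcal{F}_s$, observe that the Markov property plus kernel invariance \eqref{eq:exchangeableKernel} make $P_{X_s}$ invariant under permutations preserving the color classes of $X_s$, and then apply de Finetti within a single class and the SLLN along the $\mathcal{F}_s$-measurable subclass $\{i:X^i_r=a,\,X^i_s=b\}$. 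This is the same underlying mechanism but is spelled out more explicitly than in the paper, and it is correct; it buys a cleaner justification of why the pre-$s$ colors drop out. A small additional difference worth noting: your null-row convention $Q_{s,t}(a,\cdot):=Y_t$ on $\{Y_s^a=0\}$ (versus the paper's $\delta_a$) has the pleasant side effect that, combined with \eqref{eq:q-y}, the cocycle identity \eqref{eq:cocycle} holds on null rows as well, and it keeps $Q_{s,t}$ measurable with respect to $\mathcal{E}_{\{s,t\}}$, so claims (i), \eqref{eq:cocycle}, \eqref{eq:q-y} and \eqref{eq:IH-MC} all go through as you state them.
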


Theorem \ref{thm:semigroup} follows from Corollary~\ref{corollary:stochasticMatrixProj}
in Section~\ref{section:discrete-time}. 
We emphasize that although the Markov process $(X_{t})_{t\geq
0}$ is time-homogeneous, its conditional distribution given the
exchangeable $\sigma$-algebra will in general be that of a
\emph{time-inhomogeneous} Markov process. 
Theorem~\ref{thm:semigroup} shows that the law of an
exchangeable Markov process on $[k]^{\zz{N}}$ is uniquely determined
by the law of the two-parameter matrix-valued process
$(Q_{s,t})_{s,t\geq 0}$.  Example~\ref{example:nonUniqueness} shows
that, in general, the law of $(Q_{s,t})_{s,t\geq 0}$ is not uniquely
determined by that of the projection $(Y_{t})_{t\geq 0}$. However, if
$(Y_{t})_{t\geq 0}$ is Markov and has cadlag sample paths of bounded
variation then there is at least one compatible semigroup.

\begin{thm}\label{thm:k2}
For every Markov process $(Y_{t})_{t\geq 0}$ on the k-simplex
$\simplexk$ whose sample paths are cadlag and of locally bounded
variation, there exists an exchangeable Markov process $(X_{t})_{t\geq
0}$ on $[k]^{\zz{N}}$ whose projection \eqref{eq:limFreqs} into
$\simplexk$ has the same distribution as $(Y_{t})_{t\geq 0}$.
\end{thm}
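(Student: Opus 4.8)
The plan is to construct $(X_t)_{t\geq 0}$ by the obvious device: first run a copy of $(Y_t)_{t\geq 0}$, then use the bounded-variation structure of its paths to manufacture, pathwise, a compatible random semigroup $(Q_{s,t})_{0\leq s\leq t}$, and finally define the coordinate processes $X^i_t$ to be conditionally i.i.d.\ time-inhomogeneous Markov chains on $[k]$ governed by $(Q_{s,t})$. Exchangeability of $(X_t)$ is then immediate from the conditional i.i.d.\ structure (de Finetti), the Markov property of $(X_t)$ follows from the cocycle relation \eqref{eq:cocycle} together with the Markov property of $(Y_t)$ (the pair $(X_t, \text{current }Q\text{-data})$ is Markov, and since $Q_{s,t}$ is built measurably from the path of $Y$ on $[s,t]$, no extra state is needed beyond what $X_t$ already determines via its empirical frequencies), and the fact that the empirical frequencies of $X_t$ converge to $Y_t$ is a conditional strong law of large numbers given $\mathcal{E}$ combined with the compatibility relation \eqref{eq:q-y}, exactly as in the proof of Proposition~\ref{proposition:mean-field-limits}. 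The reconstruction of the generator-level data is replaced here by the integrated object $(Q_{s,t})$, which is what makes the non-differentiable case go through.

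The heart of the matter is therefore the construction of the compatible random semigroup from a single bounded-variation cadlag path $y = (y_t)_{t\geq 0}$ in $\simplexk$. I would proceed as follows. Write the total-variation measure of the path $y$ on $[0,\infty)$; because the path has locally bounded variation in the $L^1$ metric, there is a $\sigma$-finite measure $\mu$ on $[0,\infty)$ and a derivative-like vector field that describes the flow of mass between colors. Concretely, decompose $\mu$ into its continuous part $\mu_c$ and its atomic part (the jump times $T_1, T_2, \dots$ of $y$). At a jump time $T_n$, with $y_{T_n-} = u$ and $y_{T_n} = v$, one must choose a stochastic matrix $M_n$ with $uM_n = v$ — such a matrix always exists (e.g., the ``relabeling'' matrix that moves exactly the deficit mass, or more canonically the matrix whose rows are chosen to transport $u$ to $v$ with minimal disturbance), and one fixes a measurable selection. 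On the continuous part, away from jumps, the path $y_t$ is an absolutely-continuous-in-$\mu_c$ curve, and one builds an infinitesimal rate matrix $G_t$ (a $\mu_c$-density of rate matrices) with $y_t G_t\, d\mu_c(t)$ equal to the signed mass-flow increment $dy_t$; again a measurable selection of nonnegative off-diagonal rates realizing the prescribed net flow is available. Then define $Q_{s,t}$ as the product-integral (time-ordered exponential) $\prod_{(s,t]}(I + G_r\, d\mu_c(r))$ interleaved with the jump matrices $M_n$ for $T_n \in (s,t]$. This $Q_{s,t}$ satisfies the cocycle relation \eqref{eq:cocycle} by the multiplicativity of product integrals, and \eqref{eq:q-y} holds by construction of $G$ and the $M_n$; measurability with respect to the path of $y$ on $[s,t]$ is built in through the measurable selections, so once $y$ is realized as an exchangeable-$\sigma$-algebra-measurable object the required measurability in (i)-type conditions holds.

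The main obstacle I anticipate is precisely this pathwise construction of $(Q_{s,t})$ from a bounded-variation path with possibly singular-continuous variation: one must (a) make the measurable selections of $G_t$ and $M_n$ uniformly and jointly measurably in the path, (b) show the product integral over the singular-continuous part is well-defined and yields genuine stochastic matrices — here local boundedness of the total variation is what controls the Picard-type iteration and prevents blow-up, and one may need to pass to a time-change by the variation function so that the driving measure becomes Lebesgue-like — and (c) verify the cocycle identity across the interaction of continuous flow and jumps. A secondary subtlety is that the product integral need not be invertible, but invertibility is not required: only the semigroup (cocycle) property and nonnegativity of entries are needed, which is exactly why stochastic matrices, rather than rate matrices, are the right objects. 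Once $(Q_{s,t})$ is in hand the remainder — defining $X^i$ conditionally i.i.d., checking exchangeability, the Markov property, and the frequency convergence — is routine and parallels the arguments already used for Proposition~\ref{proposition:mean-field-limits} and Theorem~\ref{thm:semigroup}.
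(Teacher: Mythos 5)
Your proposal is correct and follows essentially the same route as the paper: build the compatible random semigroup pathwise from $(Y_t)$ by differentiating with respect to the total-variation measure (time-changing by the variation function), selecting rate matrices that realize the net mass flow, product-integrating, and treating jumps separately, then let the coordinates be conditionally i.i.d.\ time-inhomogeneous chains driven by $(Q_{s,t})$ and verify exchangeability, the Markov property, and the projection via the SLLN and \eqref{eq:q-y}. The paper differs only in presentation — it fixes an explicit proportional (``Marx--Engels'') choice of rate matrix rather than a generic measurable selection, handles jumps by opening them into linear segments, and records the minimality identity \eqref{eq:minimality}, which it then uses to confirm that the constructed process has cadlag coordinate paths.
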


Theorem \ref{thm:k2} will be proved in
Section~\ref{section:construction} by constructing a compatible
random semigroup that moves the minimum possible amount of mass
subject to the compatibility condition \eqref{eq:q-y}.

\noindent 

\section{Discrete-Time Exchangeable Markov
Chains}\label{section:discrete-time}

\bigskip \emph{For the remainder of the paper, we assume that
$(X_{t})_{t\in T}$, where either $T=[0,\infty )$ or $T=\zz{Z}_{+}$, is
an exchangeable Markov process on $[k]^{\zz{N}}$ with cadlag sample
paths.}

\begin{prop}\label{proposition:exchangeableKernel}
For every finite sequence of
times $0=t_{0}<t_{1}<\dotsb <t_{n}<\infty$, the vector-valued sequence 
\[
\begin{pmatrix}{}
X^{1}_{t_{0}}\\
X^{1}_{t_{1}}\\
\vdots \\
X^{1}_{t_{n}}
\end{pmatrix},
\begin{pmatrix}{}
X^{2}_{t_{0}}\\
X^{2}_{t_{1}}\\
\vdots \\
X^{2}_{t_{n}}
\end{pmatrix},
\dotsb 
\]
is exchangeable.
\end{prop}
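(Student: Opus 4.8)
The plan is to reduce the claim to a statement about a single vector-valued process and then invoke the Markov property together with permutation-invariance of the transition kernel. The key observation is that the finite-dimensional sequence displayed in the proposition is built from the coordinate paths $(X^i_{t_0}, X^i_{t_1}, \ldots, X^i_{t_n})$ of the process, and these coordinates live in the finite set $[k]^{\{t_0,\ldots,t_n\}} \cong [k]^{n+1}$. So what must be shown is that the random variable $\big((X^i_{t_0},\ldots,X^i_{t_n})\big)_{i\in\zz{N}}$, taking values in $\big([k]^{n+1}\big)^{\zz{N}}$, is exchangeable in the ordinary sense of Section~\ref{ssec:preliminaries}.

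\textbf{Step 1: Compute the joint law via the Markov property.} For a permutation $\sigma$ of $\zz{N}$ and arbitrary $a_0,\ldots,a_n \in [k]^{\zz{N}}$, use the Markov property of $(X_t)_{t\geq0}$ to write
\[
P\big(X_{t_0}\in da_0,\, X_{t_1}\in da_1,\,\ldots,\, X_{t_n}\in da_n\big)
= \mu(da_0)\,\prod_{\ell=1}^{n} p_{t_\ell - t_{\ell-1}}(a_{\ell-1}, da_\ell),
\]
where $\mu$ is the law of $X_{t_0}=X_0$. (Since the state space is a product of discrete sets, I will phrase this in terms of cylinder events on finitely many sites, where everything is an honest probability of an atom; the general statement then follows by the usual $\pi$--$\lambda$ / monotone class argument.)

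\textbf{Step 2: Apply the permutation to each factor.} Replace each $a_\ell$ by $\sigma a_\ell$ and use exchangeability of the initial distribution, $\mu(\sigma\,\cdot\,) = \mu(\cdot)$, together with the permutation-invariance of the transition kernel, $p_t(\sigma x, \sigma A) = p_t(x,A)$ from \eqref{eq:exchangeableKernel}, term by term:
\[
\mu(\sigma\,da_0)\,\prod_{\ell=1}^{n} p_{t_\ell - t_{\ell-1}}(\sigma a_{\ell-1}, \sigma\,da_\ell)
= \mu(da_0)\,\prod_{\ell=1}^{n} p_{t_\ell - t_{\ell-1}}(a_{\ell-1}, da_\ell).
\]
This shows the law of $(X_{t_0}, X_{t_1}, \ldots, X_{t_n})$ as a single random element of $\big([k]^{\zz{N}}\big)^{n+1}$ is invariant under the diagonal action of $\sigma$ on all $n+1$ coordinates simultaneously. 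But the diagonal action of $\sigma$ on $\big([k]^{\zz{N}}\big)^{n+1}$ is exactly the site-relabeling action on $\big([k]^{n+1}\big)^{\zz{N}}$ under the natural identification: relabeling site $i$ by $\sigma(i)$ in the stacked vector $\binom{X^i_{t_0}}{\vdots\,X^i_{t_n}}$ is the same as relabeling site $i$ in each layer. Hence the vector-valued sequence in the statement is exchangeable.

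\textbf{Main obstacle.} There is no serious analytic difficulty here — the only point requiring care is the bookkeeping in Step 2, namely checking that the diagonal permutation action used to conclude exchangeability of the $[k]^{n+1}$-valued sequence genuinely coincides with the coordinate-relabeling action in the original Definition~\ref{definition:exchangeableMP}, and that the Markov chain rule (Step 1) is legitimate at the level of cylinder sets before passing to general measurable sets. Both are routine, but since the kernels $p_t$ are not assumed Feller, I will be careful to work throughout with cylinder events depending on finitely many sites (where $p_t(x,\cdot)$ restricted to such a cylinder depends only on finitely many coordinates of $x$ by exchangeability, and is an ordinary finite-dimensional transition probability), and then extend by a monotone class argument rather than appealing to any regularity of the kernel.
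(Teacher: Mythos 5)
Your proposal is correct and follows the same route the paper intends: the paper's proof simply states that the claim is ``a routine consequence of hypotheses (a) and (b) of Definition~\ref{definition:exchangeableMP} and the Markov property,'' and your argument---factoring the finite-dimensional law via the Markov chain rule, then applying exchangeability of $X_0$ and the kernel invariance \eqref{eq:exchangeableKernel} factor by factor, and identifying the diagonal $\sigma$-action with site relabeling of the stacked vectors---is precisely that routine verification written out. Your extra care with cylinder sets and the monotone class argument is harmless and appropriate given that no Feller regularity is assumed.
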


\begin{proof}
This is a routine consequence of hypotheses (a) and (b) of
Definition \ref{definition:exchangeableMP} and the Markov property.
\end{proof}

\begin{cor}\label{corollary:discreteProj}
For every $t\geq0$, $(X^{n}_{t})_{n\in \zz{N}}$ is an
exchangeable sequence of $[k]$-valued random variables.  Consequently, the limiting frequencies $(Y^j_t)_{j\in[k]}$ \eqref{eq:limFreqs} exist with
probability $1$. Furthermore,  the projection $(Y_{t})_{t\geq 0}$ is a
Markov process on the simplex $\simplexk$.
\end{cor}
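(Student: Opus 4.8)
The plan is to derive all three assertions from Proposition~\ref{proposition:exchangeableKernel} and the Markov property of $(X_t)_{t\ge0}$; no sample-path regularity is needed.

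\textbf{Fixed-time exchangeability and existence of $Y_t$.} For $t>0$ apply Proposition~\ref{proposition:exchangeableKernel} to the time pair $0=t_0<t_1=t$: the sequence $(X^i_0,X^i_t)_{i\in\zz{N}}$ is exchangeable, hence so is its coordinate marginal $(X^i_t)_{i\in\zz{N}}$ (for $t=0$ this is hypothesis (a) of Definition~\ref{definition:exchangeableMP}). An exchangeable $[k]$-valued sequence has almost surely convergent empirical frequencies -- by de Finetti's theorem, or directly because the empirical averages form a reverse martingale -- so for each $j\in[k]$ the limit $Y^j_t$ in \eqref{eq:limFreqs} exists a.s., and since each prelimit vector $\bigl(n^{-1}\sum_{m\le n}\mathbf 1\{X^m_t=j\}\bigr)_{j\in[k]}$ lies in $\simplexk$, so does $Y_t$. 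Write $D\subseteq[k]^{\zz{N}}$ for the measurable, finite-permutation-invariant set of configurations whose limiting frequencies exist and $\pi\colon D\to\simplexk$ for the frequency map; then $X_t\in D$ a.s.\ for each fixed $t$, and $Y_t=\pi(X_t)$ a.s.

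\textbf{The projection is Markov.} Fix $s<t$, set $r=t-s$, and let $f\colon\simplexk\to\zz{R}$ be bounded and measurable. Extend $f\circ\pi$ by $0$ off $D$ to a bounded measurable function $g_0$ on $[k]^{\zz{N}}$; because $D$ is permutation-invariant and $\pi$ is constant on each finite-permutation orbit contained in $D$, the function $g_0$ is invariant under every finite permutation $\sigma$. Put $h(x):=\int g_0(y)\,p_r(x,dy)$. Since $X_t\in D$ a.s.\ we have $f(Y_t)=g_0(X_t)$ a.s., and the Markov property of $(X_t)$ gives $E\bigl[f(Y_t)\mid\sigma(X_u:u\le s)\bigr]=E\bigl[g_0(X_t)\mid X_s\bigr]=h(X_s)$ a.s. The crucial observation is that $h$ is itself finite-permutation-invariant: equation \eqref{eq:exchangeableKernel} says that $p_r(\sigma x,\cdot)$ is the image of $p_r(x,\cdot)$ under $y\mapsto\sigma y$, so $h(\sigma x)=\int g_0(\sigma y)\,p_r(x,dy)=\int g_0(y)\,p_r(x,dy)=h(x)$ by the invariance of $g_0$. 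Hence $h(X_s)$ is measurable with respect to the exchangeable $\sigma$-algebra $\mathcal E_{\{s\}}$ of the exchangeable sequence $(X^i_s)_{i\in\zz{N}}$. By de Finetti's theorem this sequence is, conditionally on $Y_s$, i.i.d.\ with common law $Y_s$ on $[k]$, and the Hewitt--Savage $0$--$1$ law applied under that conditional law shows that $\mathcal E_{\{s\}}$ is contained, modulo $\mathcal L(X_s)$-null sets, in $\sigma(Y_s)$; therefore $h(X_s)=g(Y_s)$ a.s.\ for some measurable $g\colon\simplexk\to\zz{R}$.

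\textbf{Conclusion.} For every $u\le s$, $Y_u=\pi(X_u)$ is $\sigma(X_u)$-measurable, so $\sigma(Y_u:u\le s)\subseteq\sigma(X_u:u\le s)$ and $Y_s$ is measurable with respect to both $\sigma(Y_u:u\le s)$ and $\sigma(X_u:u\le s)$. Conditioning the identity $E[f(Y_t)\mid\sigma(X_u:u\le s)]=h(X_s)=g(Y_s)$ on the smaller $\sigma$-algebra $\sigma(Y_u:u\le s)$ gives $E[f(Y_t)\mid\sigma(Y_u:u\le s)]=g(Y_s)$, while conditioning the same identity on $\sigma(Y_s)$ gives $E[f(Y_t)\mid Y_s]=g(Y_s)$; hence $E[f(Y_t)\mid\sigma(Y_u:u\le s)]=E[f(Y_t)\mid Y_s]$. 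As $s<t$ and bounded measurable $f$ were arbitrary, $(Y_t)_{t\ge0}$ is a Markov process on $\simplexk$ (the pairwise identity is equivalent to the full Markov property by the usual iteration over finitely many future times). The step I expect to require the most care is the identification of $\mathcal E_{\{s\}}$ with $\sigma(Y_s)$ modulo null sets, together with the bookkeeping ensuring that $h$ is a genuine, everywhere-defined and everywhere-invariant function so that this identification applies; the remaining ingredients are just the exchangeability of the transition kernel and the tower property.
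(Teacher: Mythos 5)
Your proposal is correct and follows essentially the same route as the paper: first-assertion exchangeability and existence of $Y_t$ via de Finetti, then the Markov property of $(Y_t)_{t\geq0}$ by combining the Markov property of $(X_t)_{t\geq0}$ with the permutation-invariance of the transition kernel \eqref{eq:exchangeableKernel} and the de Finetti/Hewitt--Savage identification of the exchangeable $\sigma$-algebra of $(X^i_s)_{i\in\zz{N}}$ with $\sigma(Y_s)$ modulo null sets. You merely make explicit some steps the paper leaves implicit (the invariance computation $h(\sigma x)=h(x)$, the invariant domain $D$ of the frequency map, and the tower-property bookkeeping), which is fine.
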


\begin{note}\label{note:caution}
Only the almost sure existence of limiting frequencies $Y_{t}$ at fixed times
$t$ is asserted here. The almost sure existence of limiting frequencies at
\emph{all} times $t$ will be proved later. Thus, the second assertion
of the corollary, that the projection $Y_{t}$ is a Markov process,
should be interpreted only as a statement about conditional
distributions at finitely many time points.
\end{note}

\begin{proof}
The first assertion is an immediate consequence of de Finetti's
theorem. Denote by $\mathcal{F}_{t}:=\sigma(X_s)_{0\leq s\leq t}$ the $\sigma$-algebra generated
by the random variables $X_{s}$ for $s\leq t$. Since each $Y_{t}=\pi
(X_{t})$ is a measurable function of $X_{t}$, the hypothesis that $(X_{t})_{t\geq0}$ is Markov implies that the conditional distribution of
$Y_{t+r}$ given $\mathcal{F}_{t}$ coincides with the conditional
distribution of $Y_{t+r}$ given $X_{t}$. The process
$(X_{t})_{t\geq 0}$ is exchangeable; therefore, for any permutation $\sigma:\zz{N}\rightarrow\zz{N}$ fixing all but finitely many elements, the joint
distributions of $(X_{t}^{\sigma},X_{t+r}^{\sigma})$ and $(X_{t},X_{t+r})$ are identical. The projection $\pi:[k]^{\zz{N}}\rightarrow\simplexk$ is invariant
by $\sigma$, so it follows that the conditional distribution of
$Y_{t+r}$ given $X_{t}^{\sigma}$ is the same as the conditional
distribution of $Y_{t+r}$ given $X_{t}$. Thus, the conditional
distribution of $Y_{t+r}$ given $X_{t}$ depends only on the
exchangeable $\sigma$-algebra.  By de Finetti's theorem, this
is the conditional distribution of $Y_{t+r}$ given $Y_{t}$; hence,
$(Y_{t})_{t\geq 0}$ is Markov.
\end{proof}

\begin{cor}\label{corollary:stochasticMatrixProj}
For every pair of times $s<t$ and every
pair of colors $i,j\in [k]$,
\begin{equation}\label{eq:limitFreqsPairs}
	Q_{s,t} (i,j):=\lim_{n \rightarrow \infty}
	\frac{\sum_{m=1}^{n}\mathbf{1}\{X^{m}_{s}=i \quad
	\text{and}\;\; X^{m
	}_{t}=j\}}{\sum_{m=1}^{n}\mathbf{1}\{X^{m}_{s} =i\}} 
\end{equation}
exists on the event $Y^{i}_{s}>0$. With probability one, the matrix
$Q_{s,t}$ is stochastic (with the convention that, for every color $i$
such that $Y^{i}_{s}=0$, the $i$th  row $Q_{s,t} (i,\cdot)=\delta_{i}
(\cdot)$). 
\end{cor}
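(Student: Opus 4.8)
The plan is to apply Proposition~\ref{proposition:exchangeableKernel} with the two-element time set $\{s,t\}$ together with de Finetti's theorem. By that proposition the $[k]\times[k]$-valued sequence $\big((X^{m}_{s},X^{m}_{t})\big)_{m\in\zz{N}}$ is exchangeable, so de Finetti's theorem guarantees that with probability one, for every pair $(i,j)\in[k]\times[k]$ the empirical frequency
\[
	P_{s,t}(i,j):=\lim_{n\to\infty}\frac{1}{n}\sum_{m=1}^{n}\mathbf{1}\{X^{m}_{s}=i,\;X^{m}_{t}=j\}
\]
exists, and the array $\big(P_{s,t}(i,j)\big)_{i,j\in[k]}$ is a (random) probability distribution on $[k]\times[k]$. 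Summing over $j$, respectively over $i$, and invoking Corollary~\ref{corollary:discreteProj} identifies the marginals: $\sum_{j\in[k]}P_{s,t}(i,j)=Y^{i}_{s}$ and $\sum_{i\in[k]}P_{s,t}(i,j)=Y^{j}_{t}$, almost surely.

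Next I would rewrite the ratio in \eqref{eq:limitFreqsPairs} as a quotient of Ces\`aro averages,
\[
	\frac{\sum_{m=1}^{n}\mathbf{1}\{X^{m}_{s}=i,\,X^{m}_{t}=j\}}{\sum_{m=1}^{n}\mathbf{1}\{X^{m}_{s}=i\}}
	=\frac{n^{-1}\sum_{m=1}^{n}\mathbf{1}\{X^{m}_{s}=i,\,X^{m}_{t}=j\}}{n^{-1}\sum_{m=1}^{n}\mathbf{1}\{X^{m}_{s}=i\}},
\]
whose numerator converges to $P_{s,t}(i,j)$ and whose denominator converges to $Y^{i}_{s}$. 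On the event $\{Y^{i}_{s}>0\}$ the denominator is eventually bounded away from $0$, so the quotient converges to $Q_{s,t}(i,j):=P_{s,t}(i,j)/Y^{i}_{s}$; this establishes the first assertion, the almost sure existence of the limit on $\{Y^{i}_{s}>0\}$.

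Finally I would verify that $Q_{s,t}$ is stochastic. Its entries are nonnegative, being limits of nonnegative ratios. On $\{Y^{i}_{s}>0\}$, summing over $j$ gives $\sum_{j\in[k]}Q_{s,t}(i,j)=\big(\sum_{j}P_{s,t}(i,j)\big)/Y^{i}_{s}=Y^{i}_{s}/Y^{i}_{s}=1$, so the $i$th row is a probability vector; on the complementary event $\{Y^{i}_{s}=0\}$ we adopt the stated convention $Q_{s,t}(i,\cdot)=\delta_{i}(\cdot)$, which is again a probability vector. Since de Finetti's theorem produces a single probability-one event on which all $k^{2}$ pair frequencies converge simultaneously, and Corollary~\ref{corollary:discreteProj} supplies the existence of the $Y^{i}_{s}$, intersecting these finitely many almost sure events yields one event of full probability on which $Q_{s,t}$ is stochastic. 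The only point requiring care is the bookkeeping for the degenerate rows $Y^{i}_{s}=0$, where both numerator and denominator in \eqref{eq:limitFreqsPairs} vanish and the row must be supplied by convention rather than obtained as a limit; I do not anticipate any genuine analytic obstacle beyond this.
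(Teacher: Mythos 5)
Your proposal is correct and follows essentially the same route as the paper: exchangeability of the pair sequence $\bigl((X^m_s,X^m_t)\bigr)_{m\in\zz{N}}$ via Proposition~\ref{proposition:exchangeableKernel}, de Finetti/Hewitt--Savage for almost sure existence of the pair frequencies, and finiteness of $k$ for stochasticity. You simply spell out the details the paper leaves implicit (the ratio of Ces\`aro averages, the marginal identification, and the convention for rows with $Y^i_s=0$), all of which are handled correctly.
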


\begin{proof}
By Proposition~\ref{proposition:exchangeableKernel}, the vector-valued
sequence 
\[
	\begin{pmatrix}{}
X^{1}_{s}\\
X^{1}_{t}
\end{pmatrix}, 
\begin{pmatrix}{}
X^{2}_{s}\\
X^{2}_{t}
\end{pmatrix},
\dotsb 
\]
is exchangeable. Hence, by de Finetti's theorem, limiting frequencies
for all color pairs $i,j\in[k]$ exist almost surely.  Since the number of colors $k$ is finite, the corresponding matrix $Q_{s,t}$ is almost surely stochastic.
\end{proof}

\begin{prop}\label{proposition:conditional-IH}
Let $A$ be any finite set of times $0<s_{0}<s_{1}<\dotsb
<s_{n}<\infty$. Then, conditional on the exchangeable $\sigma$-algebra
$\mathcal{E}_{A}$, the vectors $X^{1}_{A},X^{2}_{A},\dotsc$ are
independent, identically distributed, and, for each $m\in \zz{N}$, the
sequence $\{X^{m}_{s_{r}} \}_{0\leq r\leq n}$ is a time-inhomogeneous
Markov chain on $[k]$ with transition probabilities
$Q_{s_{r},s_{r+1}}$ in \eqref{eq:limitFreqsPairs}. 
\end{prop}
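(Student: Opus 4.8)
The plan is to combine de Finetti's theorem with the Markov property of $(X_t)_{t\in T}$. By Proposition~\ref{proposition:exchangeableKernel} the sequence $X^1_A,X^2_A,\dotsc$, regarded as a sequence of $[k]^{n+1}$-valued random variables, is exchangeable, so de Finetti's theorem supplies a random probability measure $\mu$ on $[k]^{n+1}$ --- namely the almost-sure limit of the empirical measures $N^{-1}\sum_{m=1}^{N}\delta_{X^m_A}$, which is measurable with respect to the exchangeable $\sigma$-algebra $\mathcal{E}_A$ of this sequence --- such that conditional on $\mathcal{E}_A$ the vectors $X^m_A$ are i.i.d.\ with common law $\mu$. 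This yields the first assertion and reduces the Markov-chain assertion to proving that, almost surely,
\begin{equation}\label{eq:factorization-plan}
	\mu(a_0,\dotsc,a_n)=Y^{a_0}_{s_0}\prod_{r=0}^{n-1}Q_{s_r,s_{r+1}}(a_r,a_{r+1})
\end{equation}
for every $(a_0,\dotsc,a_n)\in[k]^{n+1}$, where in the rows with $Y^{a_r}_{s_r}=0$ we use the $\delta$-row convention of Corollary~\ref{corollary:stochasticMatrixProj}. The case $n=1$ is exactly Corollary~\ref{corollary:stochasticMatrixProj} (the $(0,1)$-marginal of $\mu$ is the limiting empirical distribution of the pairs $(X^m_{s_0},X^m_{s_1})$), and the first-coordinate marginal of $\mu$ is $Y_{s_0}$ by Corollary~\ref{corollary:discreteProj}.

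To reach \eqref{eq:factorization-plan} in general it suffices to prove the one-step telescoping identity: almost surely, for every finite collection $s_0<\dotsb<s_n$ and every color tuple,
\begin{equation}\label{eq:telescope-plan}
	\mu^{(n)}(a_0,\dotsc,a_n)=\mu^{(n-1)}(a_0,\dotsc,a_{n-1})\,Q_{s_{n-1},s_n}(a_{n-1},a_n),
\end{equation}
where $\mu^{(r)}$ denotes the directing measure of the exchangeable sequence $(X^m_{s_0},\dotsc,X^m_{s_r})_m$, so that $\mu^{(n)}=\mu$; applying \eqref{eq:telescope-plan} with $n$ replaced successively by $n,n-1,\dotsc,2$ and then invoking the $n=1$ case gives \eqref{eq:factorization-plan}. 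Put $B:=\{m:X^m_{s_l}=a_l\text{ for }0\le l\le n-1\}$ and $B':=\{m:X^m_{s_{n-1}}=a_{n-1}\}$, so $B\subseteq B'$ and both sets are $\mathcal{F}_{s_{n-1}}$-measurable. Since $\mu^{(n)}(a_0,\dotsc,a_n)=\lim_N N^{-1}\#\{m\in B\cap[N]:X^m_{s_n}=a_n\}$ while $N^{-1}|B\cap[N]|\to\mu^{(n-1)}(a_0,\dotsc,a_{n-1})$, identity \eqref{eq:telescope-plan} follows --- on the event $\mu^{(n-1)}(a_0,\dotsc,a_{n-1})>0$, the complementary event being trivial --- once it is shown that the limiting frequency of those $m\in B$ with $X^m_{s_n}=a_n$ equals $Q_{s_{n-1},s_n}(a_{n-1},a_n)$, i.e.\ coincides with the same frequency computed over the larger block $B'$.

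This comparison of block frequencies is where the Markov property is used, and I expect it to be the main obstacle. Condition on $\mathcal{F}_{s_{n-1}}$: the Markov property gives $\mathcal{L}(X_{s_n}\mid\mathcal{F}_{s_{n-1}})=p_{s_n-s_{n-1}}(X_{s_{n-1}},\cdot)$, and every finite permutation of $B'$ fixes the configuration $X_{s_{n-1}}$, so by the permutation invariance \eqref{eq:exchangeableKernel} of the transition kernel the sequence $(X^m_{s_n})_{m\in B'}$ is, conditionally on $\mathcal{F}_{s_{n-1}}$, exchangeable. A conditional application of de Finetti then produces a directing measure $\rho$ on $[k]$ --- which, being the limiting empirical distribution of $(X^m_{s_n})_{m\in B'}$, equals $Q_{s_{n-1},s_n}(a_{n-1},\cdot)$ by Corollary~\ref{corollary:stochasticMatrixProj} --- under which, conditionally on $\mathcal{F}_{s_{n-1}}\vee\sigma(\rho)$, the variables $(X^m_{s_n})_{m\in B'}$ are i.i.d.\ $\rho$. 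Since $B\subseteq B'$ is $\mathcal{F}_{s_{n-1}}$-measurable, the sub-block $(X^m_{s_n})_{m\in B}$ is then also i.i.d.\ $\rho$ under the same conditioning, so by the conditional strong law of large numbers its limiting frequency of $a_n$ is again $\rho(a_n)=Q_{s_{n-1},s_n}(a_{n-1},a_n)$, as required; the case $Y^{a_{n-1}}_{s_{n-1}}=0$ is absorbed by the $\delta$-row convention. Because only finitely many times and finitely many color tuples are involved, all of the almost-sure statements above can be made to hold simultaneously on a single event of full probability, which completes the argument.
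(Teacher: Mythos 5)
Your proposal is correct and takes essentially the same route as the paper: de Finetti supplies the conditional i.i.d.\ structure of the vectors $X^m_A$, and the Markov property together with the permutation-invariance \eqref{eq:exchangeableKernel} of the transition kernel shows that the directing measure factorizes through the empirical transition matrices \eqref{eq:limitFreqsPairs}. The paper presents this factorization step as a coordinate-wise generative scheme (its equation \eqref{eq:IH-tps}), which is exactly the conditional-exchangeability/block-frequency argument you spell out via the telescoping identity, only in more detail.
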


\begin{proof}
That the vectors $X^{1}_{A},X^{2}_{A},\dotsc$ are conditionally
i.i.d.\ given $\mathcal{E}_{A}$ follows from
Proposition~\ref{proposition:exchangeableKernel} and de Finetti's
theorem.  It suffices to show that, conditional on
$\mathcal{E}_{A}$, the sequence $\{X^{m}_{s_{r}}
\}_{0\leq r\leq n}$ is Markov with the indicated transition
probabilities. 

By Proposition~\ref{proposition:exchangeableKernel}, the sequence of
random vectors $X^{1}_{A},X^{2}_{A},\dotsc$ is exchangeable, so de
Finetti's theorem and the strong law of large numbers imply that
limiting empirical frequencies exist for all vectors $j_{A}\in
[k]^{A}$. We must show that these limiting empirical frequencies are
given by
\[
	\lim_{n \rightarrow \infty}n^{-1}
	\sum_{r=1}^{n}\mathbf{1}\{X^{r}_{A}=j_{A} \} 
	=P (X^{1}_{s_{0}}=j_{0}\,|\,\mathcal{E}_{s_{0}}) 
	\prod_{l=1}^{m} Q_{s_{l-1},s_{l}} (j_{l-1},j_{l});
\]
that is, we must show that the limiting empirical frequency of the
color $j_{s_{n}}$ at those sites taking color $j_{s_{n-1}}$ at time
$s_{n-1}$ does not depend on the colors $j_{s_{r}}$ taken by these
sites at earlier times $s_{r}$. However, this is merely a consequence
of the Markov property of $X_{s_0},X_{s_1},\ldots$, as we now argue.

Note that, unconditionally, we can generate
$X_{s_0},X_{s_1},\ldots,X_{s_n}$ as follows.  For each $1\leq r\leq
n-1$, the law of $X_{s_{r+1}}$, given $\sigma(X_{s_j})_{0\leq j\leq
r}$, is a measurable function of $\pi(X_{s_r})=:Y_{s_r}$.  Moreover,
by Corollary \ref{corollary:stochasticMatrixProj}, the matrix
$Q_{s_r,s_{r+1}}$ is a random stochastic matrix with distribution
depending only on $\mathcal{E}_{s_r}$, where $Q_{s_r,s_{r+1}}(i,j)$
gives the limiting frequency of coordinates for which $X_{s_{r+1}}=j$
among those for which $X_{s_r}=i$.  Hence, given
$\sigma(X_{s_j})_{0\leq j\leq r}$, we generate a matrix
$Q_{s_r,s_{r+1}}$ randomly and, conditional on $Q_{s_r,s_{r+1}}$, we
generate $X_{s_{r+1}}$ coordinate-by-coordinate by determining
$X_{s_{r+1}}^l$, for each $l\in\mathbb{N}$, independently from
\begin{equation}\label{eq:IH-tps}
	 \mathbb{P}\{X_{s_{r+1}}^l=j|Q,X_{s_r}\}=Q(X_{s_r}^l,j).
\end{equation}
Clearly, by the strong law of large numbers, the matrix
$Q_{s_r,s_{r+1}}$, from \eqref{eq:limitFreqsPairs}, is identical to
$Q$ for those $i$ such that $Y^i_{s_r}>0$.  Moreover,
$Q_{s_r,s_{r+1}}$ is measurable with respect to
$\mathcal{E}_{(X_{s_r},X_{s_{r+1}})}\subset\mathcal{E}_A$.  By this
and \eqref{eq:IH-tps}, we conclude that $\{X_{s_r}^m\}_{0\leq r\leq
n}$ is, conditionally on $\mathcal{E}_A$, a time-inhomogeneous Markov
chain on $[k]$ with transition probabilities
\eqref{eq:limitFreqsPairs}.

\end{proof}

We now give a complete description of the set of
discrete-time exchangeable Markov chains $(X_{t})_{t\in \zz{Z}_{+}}$
on $[k]^{\zz{N}}$. Recall that $\mathcal{S}_{k}$ denotes the space of
stochastic $k\times k$ matrices, and denote by $\mathcal{P} (S_{k})$
the set of all Borel probability measures on $\mathcal{S}_{k}$. 

\begin{thm}\label{theorem:discrete-time-char}
Let $(X_{t})_{t\in \zz{Z}_{+}}$ be a discrete-time exchangeable Markov
chain on $[k]^{\zz{N}}$, and denote by $Y_{t}=\pi (X_{t})$ the
limiting empirical frequency (row) vector at time $t$ and by
$Q_{n+1}=Q_{n,n+1}$ the limiting empirical transition matrix defined
by \eqref{eq:limitFreqsPairs}. Then there exists a measurable mapping
$G:\simplexk\rightarrow \mathcal{P} (\mathcal{S}_{k})$ such that 
\begin{gather}\label{eq:Q-Law}
	\mathcal{D} (Q_{n+1}\,|\,\sigma (Y_{t},Q_{t})_{0\leq t\leq
	n})=G (Y_{n}) \quad \text{and} \\
\label{eq:Y-by-Q}
	Y_{n+1}=Y_{n}Q_{n+1}.
\end{gather}
Moreover, for each $n$, conditional on $\sigma (X_{t})_{t\leq n}$, the
coordinate variables $X^{m}_{n+1}$ are conditionally independent with
marginal distributions
\begin{equation}\label{eq:X-cond}
	P (X^{m}_{n+1}=j\,|\, X_{n})= Q_{n+1} (i,j) \quad \text{on} \;\; X^{m}_{n}=i.
\end{equation}
Conversely, for any measurable mapping $G:\simplexk\rightarrow
\mathcal{P} (\mathcal{S}_{k})$ and any initial point $Y_{0}\in
\simplexk$, there is a unique Markov chain $(X_{n})_{n\geq 0}$ on
$[k]^{\zz{N}}$ specified by equations \eqref{eq:Q-Law},
\eqref{eq:Y-by-Q},  \eqref{eq:X-cond}, and the initial condition that
the random variables $X^{m}_{0}$ are i.i.d.\ with common distribution $Y_{0}$.
\end{thm}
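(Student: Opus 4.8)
The plan is to treat the two directions separately, with most of the work on the forward direction already essentially assembled from the preceding results. For the forward direction, fix the discrete-time exchangeable Markov chain $(X_{t})_{t\in\zz{Z}_{+}}$. By Corollary~\ref{corollary:discreteProj} the limiting frequencies $Y_{t}=\pi(X_{t})$ exist a.s.\ for each $t$, and by Corollary~\ref{corollary:stochasticMatrixProj} the limiting transition matrices $Q_{n+1}=Q_{n,n+1}$ defined by \eqref{eq:limitFreqsPairs} exist a.s.\ and are stochastic; the compatibility relation \eqref{eq:Y-by-Q}, $Y_{n+1}=Y_{n}Q_{n+1}$, is immediate from counting coordinates and the strong law of large numbers (it is a special case of \eqref{eq:q-y}). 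Equation \eqref{eq:X-cond} is then exactly the $n=1$ instance of Proposition~\ref{proposition:conditional-IH} (applied with the extra conditioning on $Q_{n+1}$, as in \eqref{eq:IH-tps}): conditional on $\sigma(X_{t})_{t\leq n}$, or equivalently on $(X_{n},Q_{n+1})$, the coordinates $X^{m}_{n+1}$ are i.i.d.\ across $m$ with law $Q_{n+1}(X^{m}_{n},\cdot)$. The one genuinely new point is \eqref{eq:Q-Law}: that the conditional law of $Q_{n+1}$ given the past $\sigma(Y_{t},Q_{t})_{0\leq t\leq n}$ depends only on $Y_{n}$, and measurably so. This I would extract from the proof of Corollary~\ref{corollary:discreteProj}: exchangeability of the chain forces the conditional law of $X_{n+1}$ given $X_{n}$ (hence of any $\sigma$-algebra-measurable functional of the pair, in particular of $Q_{n+1}$ via \eqref{eq:limitFreqsPairs}) to depend on $X_{n}$ only through the exchangeable $\sigma$-algebra, which by de Finetti is $\sigma(Y_{n})$. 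Since the pair $(X_{n},X_{n+1})$ is conditionally independent of $\sigma(X_{t})_{t<n}$ given $X_{n}$ (Markov property), and $Y_{t},Q_{t}$ for $t\leq n$ are functions of $\sigma(X_{t})_{t\leq n}$, conditioning on the coarser $\sigma(Y_{t},Q_{t})_{t\leq n}$ cannot reintroduce dependence beyond $Y_{n}$. Measurability of the resulting kernel $G:\simplexk\to\mathcal{P}(\mathcal{S}_{k})$ follows from a standard disintegration/regular-conditional-probability argument, using that $\simplexk$ and $\mathcal{S}_{k}$ are Polish.

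For the converse, given $G$ and $Y_{0}$ I would build $(X_{n})_{n\geq0}$ recursively on a single probability space. Start with $X^{m}_{0}$ i.i.d.\ $\sim Y_{0}$; having constructed $X_{0},\dots,X_{n}$, first draw $Q_{n+1}\sim G(Y_{n})$ where $Y_{n}=\pi(X_{n})$ (note $Y_{n}$ exists a.s.\ by SLLN since the $X^{m}_{j}$ built so far are, by induction, an exchangeable array — indeed conditionally i.i.d.\ given the $Q$'s — so de Finetti applies row by row), then draw the $X^{m}_{n+1}$ conditionally independently with $P(X^{m}_{n+1}=j\mid Q_{n+1},X_{n})=Q_{n+1}(X^{m}_{n},j)$. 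One checks inductively that $(X^{m}_{0},\dots,X^{m}_{n+1})_{m\in\zz{N}}$ is exchangeable (it is conditionally i.i.d.\ across $m$ given $(Q_{1},\dots,Q_{n+1})$, by construction), that the limiting frequency identity $Y_{n+1}=Y_{n}Q_{n+1}$ holds a.s.\ by the SLLN, and that the resulting process is Markov: the conditional law of $X_{n+1}$ given $\sigma(X_{t})_{t\leq n}$ is obtained by integrating \eqref{eq:X-cond} against $G(Y_{n})$, which is a measurable function of $X_{n}$ alone, and the kernel does not depend on $n$ in any way not already encoded through $Y_{n}$. Permutation-invariance of this kernel (condition (b) of Definition~\ref{definition:exchangeableMP}) is clear since the recipe treats all coordinates symmetrically, and $X_{0}$ is exchangeable; hence $(X_{n})$ is an exchangeable Markov chain. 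Uniqueness in law follows because the joint law of $(X_{0},\dots,X_{n})$ is determined for every $n$ by \eqref{eq:Q-Law}, \eqref{eq:Y-by-Q}, \eqref{eq:X-cond}, and the initial condition, via Kolmogorov extension.

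The main obstacle is the forward-direction claim that $Q_{n+1}$ carries \emph{all} the relevant dependence and that its conditional law, given the full past of the pair process $(Y_{t},Q_{t})$, collapses to a measurable function of $Y_{n}$ alone. The subtlety is that $\sigma(X_{t})_{t\leq n}$ is strictly finer than $\sigma(Y_{t},Q_{t})_{t\leq n}$, so one must argue that the ``extra'' information in $X_{t}$ (beyond its frequency vector and the empirical transition matrices) is conditionally irrelevant for predicting $Q_{n+1}$ — this is where exchangeability, the Markov property, and de Finetti's tail-triviality (the exchangeable $\sigma$-algebra of $X_{t}$ is generated by $Y_{t}$ up to null sets) must be combined carefully. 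Once that reduction is in place, everything else is bookkeeping with regular conditional probabilities and the strong law of large numbers.
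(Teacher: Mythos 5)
Your proposal is correct and follows essentially the same route as the paper: the forward direction assembles Corollaries~\ref{corollary:discreteProj} and \ref{corollary:stochasticMatrixProj} with Proposition~\ref{proposition:conditional-IH}, using the permutation-invariance of the kernel plus de Finetti to collapse the conditional law of $Q_{n+1}$ to a measurable function of $Y_n$, and the converse is the same recursive construction (sample $Q_{n+1}\sim G(Y_n)$, then update coordinates conditionally i.i.d.). Your write-up in fact supplies more detail than the paper's proof (notably the tower-property step for \eqref{eq:Q-Law} and the inductive exchangeability/SLLN check in the converse), but the underlying argument is identical.
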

\begin{proof}
Let $(X_t)_{t\in\zz{Z}_+}$ be an exchangeable Markov chain on $[k]^{\zz{N}}$, then, by (b) of Definition \ref{definition:exchangeableMP}, the law of $X_{n+1}$ given $\sigma(X_t)_{0\leq t\leq n}$ depends only on $Y_{n}=\pi(X_{n})$, which exists almost surely by Corollary \ref{corollary:stochasticMatrixProj}.  Also implied by Corollary \ref{corollary:stochasticMatrixProj} is that the conditional law of $Q_{n+1}$ given $\sigma(Y_t,Q_t)_{0\leq t\leq n}$ depends only on $Y_n$, which implies \eqref{eq:Q-Law}.  Proposition \ref{proposition:conditional-IH} now implies \eqref{eq:Y-by-Q} and \eqref{eq:X-cond}.

The converse is immediate from the following construction.  To begin, we choose $X_0$, given $Y_0$, by sampling its coordinates $X_0^1X_0^2\cdots$ i.i.d.\ from $Y_0$.  Subsequently, for each $n\geq1$, given $X_n$ has limiting frequency $Y_n=\pi(X_n)$, we draw a random stochastic matrix $S$ from $G(Y_n)$, which acts as the transition probability matrix for each coordinate, as in Proposition \ref{proposition:conditional-IH}.
\end{proof}

\section{The Color-Swatch Process}\label{section:color-swatch-process}
Assume now that $(X_{t})_{t\geq 0}$ is an exchangeable,
continuous-time Markov process on $[k]^{\zz{N}}$. By
Proposition~\ref{proposition:exchangeableKernel}, for any finite set
$A\subset \zz{R}_{+}$, the vector-valued sequence
$X^{1}_{A},X^{2}_{A},\dotsc$ is exchangeable. Consequently, by the
Hewitt-Savage theorem, for any fixed $t\geq 0$, the sequence
$X^{1}_{t},X^{2}_{t},\dotsc$ has limiting empirical frequencies with
probability one. Since $\zz{R}_{+}$ is uncountable, it does not directly
follow that limiting empirical frequencies exist simultaneously for
all $t\geq 0$. In this section, we prove that, under the additional
hypothesis that the sample paths are cadlag, limiting empirical
frequencies exist for all times $t$, and that the induced
projection of $X_{t}$ to the $(k-1)$-dimensional simplex is a Markov process with
cadlag sample paths.

\subsection{Color Swatches}\label{ssec:colorSwatches} Viewing
$[k]$ as a collection of $k$ distinct colors, we may regard any cadlag
path $\{x (t) \}_{t\in [0,1]}$ in $[k]$ as a \emph{color swatch}, that
is, a concatenation of finitely many nonoverlapping colored intervals
$J_{i}$ whose union is $[0,1]$. In general, a color swatch is
parametrized by
\begin{enumerate}
\item [(i)] a positive integer $m+1$
(the number of intervals $J_{i}$); 
\item [(ii)] a sequence $0=t_{0}<t_{1}<\dotsb <t_{m+1}=1$,  the
endpoints of the intervals $J_{i}$; and
\item [(iii)] a list
$(\kappa_{1},\kappa_{2},\dotsc ,\kappa_{m+1})$  of colors
(elements of $[k]$) subject to the restriction that no two successive
colors in the list are the same.
\end{enumerate}
The space $\mathcal{W}$ of all color swatches is naturally partitioned
as $\mathcal{W}=\bigcup_{m=0}^{\infty}\mathcal{W}_{m}$, where
$\mathcal{W}_{m}$ is the set of all color swatches with $m+1$
distinctly colored sub-intervals. Furthermore, $\mathcal{W}$ is
equipped with the \emph{Skorohod metric} $d$, which is defined
as follows: for any pair $f,g\in \mathcal{W}$,
\begin{equation}\label{eq:skorohod}
	d (f,g)= \inf_{h} \max (\xnorm{h-id}_{\infty}, \xnorm{f-g\circ h}_{\infty}).
\end{equation}
Here $\xnorm{\cdot}_{\infty}$ denotes the sup norm, and the infimum is
over all increasing homeomorphisms $h:[0,1]\rightarrow [0,1]$.  The
elements $f,g\in \mathcal{W}$ are viewed as functions on [0,1] with
range $[k]$, which inherits the Euclidean norm on $\zz{R}$.  The
topology on $\mathcal{W}$ induced by the Skorohod metric is separable
\cite{billingsley}, and therefore $\mathcal{W}$ is a Borel space.
Thus, in particular, the Hewitt-Savage extension of the de Finetti
theorem \cite{hewitt-savage} applies to exchangeable sequences of
$\mathcal{W}$-valued random variables.

\begin{prop}\label{proposition:colorSwatchSequence}
If $(X_{t})_{t\in \zz{R}_{+}}$ is an exchangeable Markov process on
$[k]^{\zz{N}}$ with cadlag sample paths, then the sequence 
\begin{equation}\label{eq:colorSwatchSeq}
	X^{1}_{[0,1]}, X^{2}_{[0,1]},\dotsc 
\end{equation}
of color swatches induced by the coordinate processes $X^{i}_{t}$ is
an exchangeable sequence of $\mathcal{W}$-valued random variables.
Consequently,  with probability
one the empirical distributions $m^{-1}\sum_{i=1}^{m}
\delta_{X^{i}_{[0,1]}}$ converge weakly to a random Borel probability
measure $\Theta$ on $\mathcal{W}$. The random measure $\Theta$ generates the
exchangeable $\sigma$-algebra $\mathcal{E}_{[0,1]}$ of the sequence
\eqref{eq:colorSwatchSeq}, and, conditional on $\mathcal{E}_{[0,1]}$,
the color swatches $X^{1}_{[0,1]}, X^{2}_{[0,1]},\dotsc$ are
independent, identically distributed with distribution $\Theta$.
\end{prop}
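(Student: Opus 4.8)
The plan is to verify the hypotheses of the Hewitt--Savage theorem for the sequence \eqref{eq:colorSwatchSeq} of $\mathcal{W}$-valued random variables, and then read off the stated conclusions. First I would check measurability: for each $i$, the map $\omega \mapsto X^{i}_{[0,1]}(\omega)$ is a random variable taking values in the Borel space $(\mathcal{W},d)$. This is where the cadlag hypothesis enters essentially — without it the coordinate paths need not lie in $\mathcal{W}$ at all. Given cadlag paths, each coordinate path $t\mapsto X^{i}_{t}$ restricted to $[0,1]$ is a piecewise-constant, right-continuous $[k]$-valued function, hence a color swatch; measurability of the resulting $\mathcal{W}$-valued map follows because the Skorohod Borel $\sigma$-algebra on $\mathcal{W}$ is generated by the evaluation maps $f\mapsto f(q)$ for rational $q$ (this is standard for Skorohod space, see \cite{billingsley}), and each $\omega\mapsto X^{i}_{q}(\omega)$ is measurable.

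Next I would establish exchangeability of the sequence $(X^{i}_{[0,1]})_{i\in\zz{N}}$. Fix a permutation $\sigma$ of $\zz{N}$ fixing all but finitely many sites. For any finite set of times $A=\{t_{0}<\dotsb<t_{n}\}\subset[0,1]$, Proposition~\ref{proposition:exchangeableKernel} gives that the sequence of time-vectors $(X^{i}_{A})_{i\in\zz{N}}$ is exchangeable, so in particular $(X^{\sigma(i)}_{A})_{i}$ has the same law as $(X^{i}_{A})_{i}$. Since finite-dimensional distributions (over all finite $A\subset[0,1]$, equivalently over all finite rational $A$, by right-continuity) determine the law of a sequence of $\mathcal{W}$-valued random variables — the cylinder sets based on finitely many evaluation maps form a separating class generating the product $\sigma$-algebra — it follows that $(X^{\sigma(i)}_{[0,1]})_{i}$ and $(X^{i}_{[0,1]})_{i}$ have the same law on $\mathcal{W}^{\zz{N}}$. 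Hence the color-swatch sequence is exchangeable.

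With exchangeability and the fact (noted in the text just before the proposition) that $\mathcal{W}$ is a Borel space, the Hewitt--Savage theorem \cite{hewitt-savage} applies directly. It yields a random Borel probability measure $\Theta$ on $\mathcal{W}$ such that, conditionally on the exchangeable (equivalently, by Hewitt--Savage, the tail) $\sigma$-algebra, the $X^{i}_{[0,1]}$ are i.i.d.\ with common distribution $\Theta$; moreover $\Theta$ is a version of the conditional empirical distribution and by the Glivenko--Cantelli / SLLN argument the empirical measures $m^{-1}\sum_{i=1}^{m}\delta_{X^{i}_{[0,1]}}$ converge weakly to $\Theta$ almost surely. Finally, that $\Theta$ generates $\mathcal{E}_{[0,1]}$: one inclusion is immediate since $\Theta$ is an almost-sure limit of functions of $(X^{i}_{[0,1]})_{i}$ that are invariant under finite permutations, hence $\Theta$ is $\mathcal{E}_{[0,1]}$-measurable; the reverse inclusion is part of the Hewitt--Savage conclusion, namely that the conditional law given $\mathcal{E}_{[0,1]}$ equals $\Theta^{\otimes\zz{N}}$, so $\mathcal{E}_{[0,1]}$ is contained (up to null sets) in $\sigma(\Theta)$.

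The only real obstacle is the measurability/approximation bookkeeping in the first two paragraphs — making sure that "finite-dimensional (rational-time) distributions determine the law on $\mathcal{W}^{\zz{N}}$" is justified, which comes down to the standard fact that on Skorohod space the Borel $\sigma$-algebra coincides with the one generated by coordinate projections at a countable dense set of times (right-continuity supplies the missing values). Everything after that is a direct invocation of Hewitt--Savage, exactly as in the classical de Finetti picture.
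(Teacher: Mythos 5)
Your proposal is correct and follows essentially the same route as the paper: cadlag paths lie in $\mathcal{W}$, exchangeability of the swatch sequence is deduced from Proposition~\ref{proposition:exchangeableKernel} via the fact that evaluations at rational times generate the Borel $\sigma$-algebra on $\mathcal{W}$, and the remaining conclusions are read off from the Hewitt--Savage/de Finetti theorems. You simply spell out the measurability and finite-dimensional-distribution bookkeeping that the paper's proof leaves implicit.
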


\begin{proof}
By assumption, each $X^i_{[0,1]}$, $i=1,2,\ldots$, is cadlag, and so
$X^i_{[0,1]}\in\mathcal{W}$ for every $i\in\mathbb{N}$ almost surely.
That the sequence \eqref{eq:colorSwatchSeq} is exchangeable follows
from Proposition~\ref{proposition:exchangeableKernel} because the
$\sigma$-algebra $\sigma\langle
X^i_{\zz{Q}\cap[0,1]},\,i\in\mathbb{N}\rangle$ generated by the
coordinate variables $x_{q}$, where $q$ is rational, generates the
Borel $\sigma$-algebra on $\mathcal{W}$. The rest follows from the
Hewitt-Savage and de Finetti theorems.
\end{proof}

\begin{cor}\label{corollary:IH-description}
If $(X_{t})_{t\in \zz{R}_{+}}$ is an exchangeable Markov process on
$[k]^{\zz{N}}$ with cadlag sample paths, then, conditional on
$\mathcal{E}_{[0,1]}$,  the color swatches $X^{m}_{[0,1]}$ are i.i.d.\ and
$X^{m}_{[0,1]}$ has the same conditional law as the path of an
inhomogeneous, continuous-time Markov chain on the finite state
space $[k]$ with transition probabilities $Q_{s,t}$; that is, for all
$0\leq s\leq t\leq 1$, 
\begin{equation}\label{eq:IH-description}
	P (X^{m}_{t}=j \,|\, \mathcal{E}_{[0,1]}\vee  \sigma
	(X^{m}_{r})_{r\leq s}) = Q_{s,t} (i,j) \quad \text{on}\;\; X_{s}^m=i.
\end{equation}
\end{cor}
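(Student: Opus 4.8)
The plan is to combine Proposition~\ref{proposition:colorSwatchSequence} with a pointwise-in-time version of the Markov-chain identification already carried out in Proposition~\ref{proposition:conditional-IH}. By Proposition~\ref{proposition:colorSwatchSequence}, conditional on $\mathcal{E}_{[0,1]}$ the color swatches $X^1_{[0,1]},X^2_{[0,1]},\dotsc$ are i.i.d.\ with common law $\Theta$, so it suffices to identify $\Theta$ as the law of the path of a continuous-time inhomogeneous Markov chain on $[k]$ with transition matrices $Q_{s,t}$. Equivalently, I must check that the finite-dimensional distributions of a $\Theta$-distributed swatch factor in the Markovian way: for any $0\le s_0<s_1<\dotsb<s_n\le 1$,
\[
	\Theta\{w:w(s_r)=j_r,\ 0\le r\le n\}
	= \left(\int_{\Delta_k}\mathbf{1}\{a=j_0\}\,\mu_{s_0}(da)\right)\prod_{r=1}^{n}Q_{s_{r-1},s_r}(j_{r-1},j_r),
\]
where $\mu_{s_0}$ denotes the $\mathcal{E}_{[0,1]}$-conditional law of $X^1_{s_0}$. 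But this is exactly what Proposition~\ref{proposition:conditional-IH} gives once we condition on $\mathcal{E}_{[0,1]}$ rather than on the smaller $\sigma$-algebra $\mathcal{E}_A$ with $A=\{s_0,\dotsc,s_n\}$: since $\mathcal{E}_A\subseteq\mathcal{E}_{[0,1]}$, the tower property upgrades the conditional i.i.d.\ and conditional-Markov statements of that proposition from $\mathcal{E}_A$ to $\mathcal{E}_{[0,1]}$, and the transition matrices $Q_{s_{r-1},s_r}$ are $\mathcal{E}_{\{s_{r-1},s_r\}}$-measurable by Corollary~\ref{corollary:stochasticMatrixProj}, hence $\mathcal{E}_{[0,1]}$-measurable. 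This yields \eqref{eq:IH-description} on a fixed finite time set.

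To pass from finite time sets to the statement for \emph{all} $0\le s\le t\le 1$ simultaneously (which is the force of displaying it with $\sigma(X^m_r)_{r\le s}$), I would run the standard monotone-class / countable-dense-set argument. Take a countable dense set $D\subseteq[0,1]$ containing $0$ and $1$; applying the finite-set version along increasing finite subsets of $D$ and using the cocycle relation $Q_{r,t}=Q_{r,s}Q_{s,t}$ (Corollary~\ref{corollary:stochasticMatrixProj} together with consistency of the limiting frequencies) shows that, conditionally on $\mathcal{E}_{[0,1]}$, the restricted process $(X^m_q)_{q\in D}$ is an inhomogeneous Markov chain with the matrices $Q_{s,t}$, $s,t\in D$. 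Right-continuity of the sample paths then extends the transition identity from $s,t\in D$ to all real $0\le s\le t\le 1$: given $X^m_s=i$, choose $q_n\downarrow s$ and $q'_n\downarrow t$ in $D$, use $X^m_{q_n}\to X^m_s$, $X^m_{q'_n}\to X^m_t$ a.s., and the fact that $Q_{q_n,q'_n}$ converges to $Q_{s,t}$ (again by the cocycle relation plus right-continuity of $t\mapsto Y_t$, which is Theorem~\ref{thm:main} as established through Section~\ref{section:color-swatch-process}). Finally, $\mathcal{E}_{[0,1]}\vee\sigma(X^m_r)_{r\le s}$ plays the role of the conditioning $\sigma$-algebra because, conditionally on $\mathcal{E}_{[0,1]}$, the coordinate $X^m$ is independent of the other coordinates, so conditioning additionally on its own past is exactly the Markov-chain conditioning.

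The main obstacle is the interchange of the two limits — the $n\to\infty$ empirical-frequency limit defining $Q_{s,t}$ and $\Theta$, and the continuum-of-times limit needed to get a statement valid simultaneously for all $s\le t$. The delicate point is ensuring that the $\mathcal{E}_{[0,1]}$-conditional law $\Theta$ genuinely has a \emph{continuous-time} Markov chain structure, i.e.\ that the inhomogeneous transition semigroup $(Q_{s,t})$ governing it is the same one arising from the fixed-pair frequencies \eqref{eq:limitFreqsPairs}; this is handled by first working on the countable set $D$, where everything is a countable intersection of almost-sure events, and then leveraging cadlag regularity (already available from Proposition~\ref{proposition:colorSwatchSequence}, since $\Theta$ is supported on $\mathcal{W}$) to remove the restriction to $D$. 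Once those two limiting procedures are correctly ordered, the corollary is a direct consequence of Propositions~\ref{proposition:colorSwatchSequence} and \ref{proposition:conditional-IH} and Corollary~\ref{corollary:stochasticMatrixProj}.
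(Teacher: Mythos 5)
Your argument is correct and follows essentially the same route as the paper's proof: conditional i.i.d.\ structure of the swatches from Proposition~\ref{proposition:colorSwatchSequence}, the finite-time conditional Markov identification from Proposition~\ref{proposition:conditional-IH}, and passage from a countable dense time set (the paper uses dyadic rationals, noting $\mathcal{E}_{[0,1]}=\sigma\left(\bigvee_n\mathcal{E}_{\mathcal{D}_n}\right)$) to all times via cadlag regularity. The only caveat is that ``tower property'' is not quite the right mechanism for enlarging the conditioning $\sigma$-algebra from $\mathcal{E}_A$ to $\mathcal{E}_{[0,1]}$ --- what actually does the work is the conditional i.i.d.\ structure, which makes the $\mathcal{E}_{[0,1]}$-conditional law of $X^1_A$ the $A$-marginal of $\Theta$ and hence a.s.\ equal to the directing measure of $(X^m_A)_{m\geq1}$, together with the $\mathcal{E}_{\{s,t\}}$-measurability of $Q_{s,t}$ --- but you cite both of these facts, so the argument goes through.
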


\begin{proof}
We have already shown, in Proposition
\ref{proposition:colorSwatchSequence}, that the color swatches
$X^{m}_{[0,1]}$ are conditionally i.i.d.\ given the exchangeable
$\sigma$-algebra $\mathcal{E}_{[0,1]}$. That the individual color
swatches are conditionally inhomogeneous Markov processes with
transition probabilities \eqref{eq:IH-description} follows routinely
from Proposition~\ref{proposition:conditional-IH}, because the
exchangeable $\sigma$-algebra $\mathcal{E}_{[0,1]}$ is generated by
$\vee_{n=1}^{\infty}\mathcal{E}_{\mathcal{D}_{n}}$, where
$\mathcal{D}_{n}$ is the set of $n$th-level dyadic rationals
$m/2^{n}$ in [0,1].
\end{proof}

\subsection{Existence of limiting empirical color frequencies}\label{ssec:empirical} 

Proposition \ref{proposition:colorSwatchSequence} implies that if
$X_{t}$ is an exchangeable Markov process on $[k]^{\zz{N}}$ with
cadlag sample paths, then the joint distribution of the sequence $(X^{i}_{[0,1]})_{i\in\zz{N}}$ of color
swatches is a mixture of product measures. Thus, to prove that this
sequence has well-defined limiting empirical color frequencies  at all
times $t\in [0,1]$, it suffices to show that this is the case for any
sequence of i.i.d.\ color swatches.

\begin{prop}\label{proposition:empiricalFrequencies}
Let $Z^{1},Z^{2},\dotsc $ be a sequence of
independent, identically distributed $\mathcal{W}$-valued random
variables. Then, with probability one, for every $t\in [0,1]$ the
empirical distributions of the sequence $(Z^{i}_{t})_{i\in\mathbb{N}}$
converge to a non-random probability distribution $\pi (Z_{t}):=(\pi(Z_t)^j)_{j\in[k]}$ on
$[k]$; that is, for each $j\in [k]$ and $t\in [0,1]$,
\begin{equation}\label{eq:empiricalFrequencies}
	\lim_{n \rightarrow \infty}n^{-1}\sum_{i=1}^{n}
	\mathbf{1}\{Z^{i}_{t}=j \} =\pi (Z_{t})^{j}.
\end{equation}
\end{prop}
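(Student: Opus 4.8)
The plan is to combine a functional strong law of large numbers on the Skorohod space $\mathcal{W}$ with a separate, countable–exceptional–set argument handling the times at which $\mu$ charges a jump. Write $\mu$ for the common law of the $Z^{i}$ on $\mathcal{W}$ and $\mu_{n}:=n^{-1}\sum_{i=1}^{n}\delta_{Z^{i}}$ for the empirical measures. Since $\mathcal{W}$ is separable, Varadarajan's theorem gives that $\mu_{n}\Rightarrow\mu$ weakly with probability one; call this almost sure event $\Omega_{0}$. The candidate for the limit in \eqref{eq:empiricalFrequencies} is $\pi(Z_{t})^{j}:=\mu\{f\in\mathcal{W}:f(t)=j\}$, which for each fixed $t$ is a probability vector on $[k]$ because the sets $\{f:f(t)=j\}$, $j\in[k]$, partition $\mathcal{W}$.

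The first substantive step is to show that the set of \emph{fixed discontinuity times}
\[
	D:=\{t\in[0,1]:\mu(J_{t})>0\},\qquad J_{t}:=\{f\in\mathcal{W}:f(t-)\neq f(t)\}
\]
(with $J_{0}=\emptyset$) is at most countable. Define a Borel measure $\rho$ on $[0,1]$ by $\rho(B)=E\bigl[\#\{s\in B:Z^{1}(s-)\neq Z^{1}(s)\}\bigr]$; by monotone convergence this is a genuine measure, and decomposing $\mathcal{W}=\bigcup_{m\geq0}\mathcal{W}_{m}$ (every element of $\mathcal{W}_{m}$ having exactly $m$ jumps) exhibits it as $\rho=\sum_{m}\rho_{m}$ with $\rho_{m}([0,1])=m\,\mu(\mathcal{W}_{m})<\infty$. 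Thus $\rho$ is $\sigma$-finite and so has at most countably many atoms; since $\mu(J_{t})\leq\rho(\{t\})$, the set $D$ lies among those atoms and is countable.

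Off $D$ the argument is topological. For $t\notin D$ the evaluation $\mathrm{ev}_{t}:\mathcal{W}\to[k]$, $f\mapsto f(t)$, is continuous at every $f$ that is continuous at $t$ (a standard property of the Skorohod topology, and immediate here since the range is discrete), hence $\mu$-a.e.\ continuous. The continuous mapping theorem applied to $\mu_{n}\Rightarrow\mu$ on $\Omega_{0}$ then yields $\mu_{n}\circ\mathrm{ev}_{t}^{-1}\Rightarrow\mu\circ\mathrm{ev}_{t}^{-1}$ on the finite discrete set $[k]$, that is $n^{-1}\sum_{i=1}^{n}\mathbf{1}\{Z^{i}_{t}=j\}=\mu_{n}\{f(t)=j\}\to\pi(Z_{t})^{j}$ for every $j\in[k]$; since this invokes only the single event $\Omega_{0}$ it holds simultaneously for all $t\in[0,1]\setminus D$. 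For the countably many $t\in D$ one instead invokes the ordinary strong law of large numbers: for each such $t$ and each $j\in[k]$ there is an almost sure event on which $n^{-1}\sum_{i=1}^{n}\mathbf{1}\{Z^{i}_{t}=j\}\to\mu\{f(t)=j\}=\pi(Z_{t})^{j}$, and intersecting these countably many events gives an almost sure event $\Omega_{1}$. On $\Omega_{0}\cap\Omega_{1}$ the conclusion \eqref{eq:empiricalFrequencies} holds for all $t\in[0,1]$ and all $j\in[k]$.

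The step I expect to require the most care is the countability of $D$. The naive Fubini argument only shows $\mu(J_{t})=0$ for Lebesgue-almost every $t$, and the classical route to countability — bounding the expected number of jumps exceeding a fixed size — is unavailable because $Z^{1}$ may have infinitely many jumps in expectation. The stratification $\mathcal{W}=\bigcup_{m}\mathcal{W}_{m}$ is precisely what makes the expected-jump-count measure $\rho$ $\sigma$-finite rather than merely a measure, and hence forces its atoms to be countable. Everything else — Varadarajan's theorem, continuity of $\mathrm{ev}_{t}$ in the Skorohod topology off the jump set, and the continuous mapping theorem — is routine.
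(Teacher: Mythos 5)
Your proof is correct, but it takes a genuinely different route from the paper's. The paper works directly with empirical frequencies: it fixes $\varepsilon>0$, shows via the SLLN and the fact that each swatch has only finitely many jumps that the set $D$ of fixed discontinuity times is countable and each $D_{\varepsilon}$ is finite, then partitions $[0,1]\setminus D_{\varepsilon}$ into countably many intervals on which the probability of a jump is less than $\varepsilon$ (a bisection argument), and uses the SLLN on each such interval to bound the oscillation of the upper and lower limits $L^{+}_{t}-L^{-}_{t}$ by $2\varepsilon$, letting $\varepsilon\downarrow0$; non-randomness of the limit is then extracted from Kolmogorov's 0--1 law. You instead invoke Varadarajan's theorem for the empirical measures on the separable space $\mathcal{W}$ and convert almost sure weak convergence into simultaneous convergence at all $t\notin D$ through the continuity of the evaluation maps $\mathrm{ev}_{t}$ at $\mu$-a.e.\ swatch together with the continuous mapping theorem, handling the countably many $t\in D$ by the ordinary SLLN; the limit is non-random by construction, being written directly in terms of $\mu$. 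Your route is shorter and more conceptual, resting on standard Skorohod-space facts which the paper has made available by noting separability of $\mathcal{W}$; the paper's route is more elementary and self-contained (nothing beyond the SLLN and a covering argument), and it establishes the finiteness of each $D_{\varepsilon}$, which the paper reuses later (e.g.\ in Proposition~\ref{proposition:discontinuities}), whereas you only need countability of $D$.

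One justification in your countability step is misstated, although the conclusion survives. A countable sum of finite measures need not be $\sigma$-finite: if the expected number of jumps of $Z^{1}$ is infinite and the jump times are diffusely distributed (say, a heavy-tailed random number of i.i.d.\ uniform jump times), then $\rho$ gives infinite mass to every set of positive Lebesgue measure and is not $\sigma$-finite. What you actually need --- that $\{t:\rho(\{t\})>0\}$ is countable --- follows from your decomposition without any appeal to $\sigma$-finiteness: $\rho(\{t\})>0$ forces $\rho_{m}(\{t\})>0$ for some $m$, each finite measure $\rho_{m}$ has at most countably many atoms, and a countable union of countable sets is countable. With that one-line repair the argument is complete.
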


\begin{proof}
To prove that the sequence of empirical frequencies converges, we need
only show that, for any $\varepsilon >0$, the upper and lower limits
of the sequence of averages \eqref{eq:empiricalFrequencies} differ by
no more than $\varepsilon$. Fix $j\in [k]$ and, for each $t\in [0,1]$,
define
\begin{align*}
	L^{+}_{t}&=\limsup_{n \rightarrow \infty}n^{-1}\sum_{i=1}^{n}
	\mathbf{1}\{Z^{i}_{t}=j \} \quad \text{and}\\
	L^{-}_{t}&=\liminf_{n \rightarrow \infty}n^{-1}\sum_{i=1}^{n}
	\mathbf{1}\{Z^{i}_{t}=j \} .
\end{align*}
For any fixed $t\in [0,1]$, the strong law of large numbers implies
that $L^{+}_{t}=L^{-}_{t}$ almost surely.
We will show that with probability one,
$L^{+}_{t}-L^{-}_{t}\leq2\varepsilon$ for all $t\in [0,1]$.

By hypothesis, the random functions $Z^{1},Z^{2},\dotsc$ have cadlag
sample paths, and in particular, each $Z^{i}$ has at most finitely
many discontinuities in the time interval $[0,1]$.  Define
\begin{equation}\label{eq:fixed-discontinuities}
	D=\{t\in [0,1]\,:\, P\{Z^{i}_{t}\not =Z^{i}_{t-} \}>0\}.
\end{equation}
We claim that this set is at most countable. For suppose not; then
for some $\varepsilon >0$, the subset $D_{\varepsilon}$ consisting of
times $t\in D$ such that $P\{Z^{i}_{t}\not =Z^{i}_{t-} \}\geq \varepsilon$
would be uncountable. But by the SLLN, for every $t\in D_{\varepsilon}$, the
limiting fraction of  paths $Z^{i}$ that are discontinuous at $t$ is
at least $\varepsilon$.  As a consequence, the limiting fraction of
paths $Z^{i}$ that have infinitely many discontinuities in [0,1] must be
positive, contradicting the hypothesis that each all paths $Z^{i}$ are
cadlag. It follows that $D$ is at most countable.  Furthermore, for each 
$\varepsilon >0$, the subset $D_{\varepsilon }\subset D$ must be finite.

Fix $\varepsilon >0$, and consider the set $R_{\varepsilon
}=[0,1]\setminus D_{\varepsilon}$. This set is the union of finitely
many (relatively) open subintervals of [0,1] whose endpoints are
elements of $D_{\varepsilon}$. For each time $t\in R_{\varepsilon}$,
the probability that $Z^{i}$ has a discontinuity at time $t$ is less
than $\varepsilon$. We claim that there is a \emph{countable}
partition $J_1,J_{2},\ldots $ of $R_{\varepsilon}$ into
non-overlapping intervals $J_{l}$ such that, for each interval
$J_{l}$,
\begin{equation}\label{eq:Ji-char}
	P\{Z^{i}  \;\text{is discontinuous at some} \; t\in J_{l}\}<\varepsilon.
\end{equation}
To construct such a partition, begin by dividing the constituent
intervals of $R_{\varepsilon}$ into halves, and then dividing these
halves into quarters, etc., and stopping the subdivision process
whenever an interval $J_{l}$ is small enough that \eqref{eq:Ji-char}
holds.  This must happen eventually for each interval \emph{not}
abutting one of the points of $D_{\varepsilon}$, because otherwise
there would  be a nested sequence of closed intervals
$J_{n}\subset R_{\varepsilon }$ that shrink to a point $t\in
R_{\varepsilon}$, and for each $n\geq 1$ 
\[
	P\{Z^{i} \;\text{has a discontinuity in}\; J_{n}\}\geq \varepsilon .
\]
But this would imply that $t\in D_{\varepsilon}$, a contradiction.

Suppose, then, that $J_{l}$ is a countable partition of $R_{\varepsilon}$
into non-overlapping intervals $J_{l}$ such that for each interval
$J_{i}$ condition \eqref{eq:Ji-char} holds. Then the SLLN implies
that  for each interval $J_{l}$ of the partition, 
\[
	\lim_{n \rightarrow \infty}n^{-1}\sum_{i=1}^{n}
	\mathbf{1}\{Z^{i}  \;\text{is discontinuous at some} \; t\in
	J_{l}\}<\varepsilon . 
\]
It follows that neither $L^{+}_{t}$ nor $L^{-}_{t}$ can vary by more
than $\varepsilon$ over the interval $J_{i}$.  Since
$L^{+}_{s}=L^{-}_{s}$ almost surely for each endpoint $s$ of $J_{i}$
(this set is countable!), we conclude that for each interval $J_{i}$, with
probability one,
\[
	\sup_{t\in J_{i}} L^{+}_{t}-L^{-}_{t} \leq 2\varepsilon .
\]
Since $[0,1]$ is covered by the intervals $J_{i}$ and the non-random
finite set $D_{\varepsilon }$ (on which $L^{+}_{t}=L^{-}_{t}$ a.s.),
it follows that, for every $\varepsilon>0$,
\[
	 \sup_{t\in[0,1]}L_t^+-L_t^-\leq2\varepsilon\quad\text{a.s.}
\]
Since $\varepsilon >0$ is arbitrary, the limits
\eqref{eq:empiricalFrequencies} exist for all $t\in [0,1]$ almost
surely.  Since, for any finite $n\in\mathbb{N}$, the limits
\eqref{eq:empiricalFrequencies} do not depend on the first $n$
elements of $(Z^i_{[0,1]})_{i\in\mathbb{N}}$, Kolmogorov's 0-1 law
implies the limits are deterministic.

\end{proof}

\begin{cor}\label{corollary:cadlagProjections}
If $(X_{t})_{t\geq 0}$ is an exchangeable, continuous-time Markov
process on $[k]^{\zz{N}}$ with cadlag sample paths then, for every
$t\geq 0$, the empirical limiting frequency vector $Y_{t}=\pi (X_{t})$
exists, and the projection $(Y_{t})_{t\geq 0}$ is a Markov process
with cadlag sample paths in the simplex $\simplexk$.
\end{cor}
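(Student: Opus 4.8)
The plan is to assemble this corollary from the pieces already in hand, extending the interval $[0,1]$ to all of $[0,\infty)$. First I would observe that Proposition~\ref{proposition:colorSwatchSequence} and Proposition~\ref{proposition:empiricalFrequencies} together give, for the restriction to any interval $[0,T]$, the almost-sure simultaneous existence of the limiting empirical frequency vector $Y_t = \pi(X_t)$ for all $t\in[0,T]$: indeed, by Proposition~\ref{proposition:colorSwatchSequence} the sequence of color swatches $X^i_{[0,T]}$ is exchangeable, hence conditionally i.i.d.\ given $\mathcal{E}_{[0,T]}$, and Proposition~\ref{proposition:empiricalFrequencies} (applied to the conditional law, which is a product law) yields the simultaneous limits. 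Taking a countable union over $T\in\zz{N}$ gives the first assertion: with probability one, $Y_t$ exists and lies in $\simplexk$ for every $t\geq 0$.

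Next I would address the cadlag property of $(Y_t)_{t\geq 0}$. By Corollary~\ref{corollary:IH-description}, conditional on $\mathcal{E}_{[0,1]}$ the color swatches $X^m_{[0,1]}$ are i.i.d.\ copies of an inhomogeneous continuous-time Markov chain on $[k]$ with transition matrices $Q_{s,t}$; hence by the conditional SLLN, $Y_t^j = \sum_{i\in[k]} Y_0^i Q_{0,t}(i,j)$, i.e.\ $Y_t = Y_0 Q_{0,t}$ as a row vector. The path $t\mapsto Q_{0,t}$ is the transition function of a (conditionally deterministic-parameter) inhomogeneous Markov chain whose individual coordinate paths $X^m_{[0,1]}$ are almost surely cadlag; a standard argument shows that a family of transition matrices realized in this way is itself cadlag in $t$ (right-continuity and existence of left limits of $Q_{0,t}$ follow from dominated convergence applied to $Q_{0,t}(i,j) = P(X^1_t=j\mid X^1_0=i,\ \mathcal{E}_{[0,1]})$ together with the cadlag property of the swatch $X^1_{[0,1]}$). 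Therefore $Y_t = Y_0 Q_{0,t}$ is cadlag on $[0,1]$, and again a countable union over unit intervals (using the cocycle relation, or simply re-running the argument on $[N,N+1]$ with the Markov property) gives cadlag paths on $[0,\infty)$.

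Finally, the Markov property of $(Y_t)_{t\geq 0}$ was already established as conditional statements at finitely many time points in Corollary~\ref{corollary:discreteProj} (see Note~\ref{note:caution}); now that we know $Y_t$ exists simultaneously for all $t$ and has cadlag paths, these finite-dimensional conditional distributions determine the law of the process, and the Markov property at finite time sets upgrades to the Markov property of the process $(Y_t)_{t\geq 0}$ in the usual sense. I expect the main obstacle to be the verification that $t\mapsto Q_{0,t}$ (equivalently $t\mapsto Y_t$) is cadlag: one must be careful that the conditional SLLN identification $Y_t = Y_0Q_{0,t}$ holds \emph{simultaneously} in $t$ (which is exactly what Proposition~\ref{proposition:empiricalFrequencies} buys us), and that the left-limit $Y_{t-}$ computed pathwise agrees with $Y_0 Q_{0,t-}$; once this simultaneous identification is in place, right-continuity and existence of left limits of the deterministic function $t\mapsto Q_{0,t}$ follow from the cadlag property of a single swatch by bounded convergence. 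The remaining steps are routine.
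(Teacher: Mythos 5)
Your proposal is correct, and two of its three ingredients (simultaneous existence of $Y_t$ via Propositions~\ref{proposition:colorSwatchSequence} and \ref{proposition:empiricalFrequencies}, and the Markov property via Corollary~\ref{corollary:discreteProj}) coincide with the paper's proof. Where you diverge is the cadlag step. The paper argues that $(Y_t)_{t\geq0}$ is continuous at every $t\notin D$, with $D$ the countable, non-random set \eqref{eq:fixed-discontinuities} of fixed discontinuity times, leaning on the oscillation bounds over the intervals $J_l$ from the proof of Proposition~\ref{proposition:empiricalFrequencies} (and, for the behavior at points of $D$, on what is later made explicit in Proposition~\ref{proposition:discontinuities}). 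You instead condition on $\mathcal{E}_{[0,1]}$, use the directing measure $\Theta$ of Proposition~\ref{proposition:colorSwatchSequence} to write $Y_t^j=\Theta\{w: w(t)=j\}=\sum_i Y_0^i\,Q_{0,t}(i,j)$ simultaneously in $t$ (the simultaneity being exactly what Proposition~\ref{proposition:empiricalFrequencies}, applied conditionally, provides, with the deterministic-limit observation removing any version ambiguity), and then get right-continuity and left limits of $t\mapsto Q_{0,t}(i,j)$ everywhere by bounded convergence under the fixed measure $\Theta$, all of whose paths are cadlag swatches. This is a genuinely different mechanism and arguably cleaner at the delicate points: it yields right-continuity at times in $D$ directly, whereas the paper's one-line continuity-off-$D$ statement needs the supplementary oscillation/discontinuity analysis to close that gap; conversely, the paper's route has the advantage of identifying exactly where $Y$ can jump (only at fixed times in $D$), information it reuses in Proposition~\ref{proposition:discontinuities} and Section~\ref{sec:bv}. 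The only care points in your write-up are to define $Q_{0,t}(i,j)$ pathwise from $\Theta$ (so the bounded-convergence argument runs under one fixed measure rather than through abstract conditional expectations, avoiding null-set-per-$t$ issues) and to note that rows with $Y_0^i=0$ are irrelevant to the identity $Y_t=Y_0Q_{0,t}$; with those remarks, your argument is complete.
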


\begin{proof}
The existence of limiting empirical frequencies for all $t\geq 0$
follows from Propositions~\ref{proposition:colorSwatchSequence} and
\ref{proposition:empiricalFrequencies} and the Hewitt-Savage extension
of de Finetti's theorem \cite{hewitt-savage}. The Markov property of $Y_{t}$ follows from
Corollary~\ref{corollary:discreteProj}. Finally, the sample paths of
$(Y_{t})_{t\geq 0}$ are cadlag since, by Proposition
\ref{proposition:empiricalFrequencies}, $(Y_t)_{t\geq0}$ is continuous
at every $t\notin D$ almost surely, where $D$ is defined by
\eqref{eq:fixed-discontinuities}, and $D$ is a countable, non-random
subset of $[0,1]$.
\end{proof}

\subsection{Characterization of
discontinuities}\label{ssec:discontinuities}

\begin{prop}\label{proposition:discontinuities}
Let $(X_{t})_{t\in \zz{R}_{+}}$ be an exchangeable Markov process on
$[k]^{\zz{N}}$ with cadlag sample paths. Then, with probability one, the
sample path $t\mapsto X_{t}$ has at most countably many
discontinuities. Furthermore, with probability one, there are precisely two possible types
of discontinuity at $s$: either 
\begin{enumerate}
\item [(I)] there exists a unique $i\in \zz{N}$ such that $X^{i}_{t}$
is discontinuous at $s$, or
\item [(II)] $P ( X^{1}_{t}  \;\text{is discontinuous at } \; t=s \,|\, \mathcal{E})>0$,
and for each pair $j_{1}\not =j_{2}\in [k]$,
\begin{equation}\label{eq:limFreqJump}
	\lim_{n \rightarrow \infty} n^{-1}\sum_{i=1}^{n} 
	\mathbf{1}\{X^{i}_{s-}=j_{1} \;\text{and}\; X^{i}_{s}=j_{2}\}
	= P (X^{1}_{s-}=j_{1} \;\text{and}\; X^{1}_{s}=j_{2} \,|\, \mathcal{E}).
\end{equation}
\end{enumerate}
Moreover, the projection $Y_{t}=\pi (X_{t})$ to the simplex has cadlag
paths, with discontinuities only at the times of type-(II)
discontinuities of $(X_{t})_{t\in\mathbb{R}^+}$.
\end{prop}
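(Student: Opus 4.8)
The plan is to prove Proposition~\ref{proposition:discontinuities} in four stages: (1) reduce to the case of an i.i.d.\ sequence of color swatches using the de Finetti / Hewitt--Savage structure established in Proposition~\ref{proposition:colorSwatchSequence}; (2) identify the set of possible ``global'' discontinuity times $D$ defined by \eqref{eq:fixed-discontinuities} and show it is countable; (3) classify discontinuities at times $s\notin D$ versus $s\in D$; and (4) verify the claim about the projection $Y_t$.

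First I would invoke Proposition~\ref{proposition:colorSwatchSequence}: conditional on $\mathcal{E}=\mathcal{E}_{[0,\infty)}$, the coordinate swatches $X^1_{[0,T]},X^2_{[0,T]},\dotsc$ are i.i.d.\ with some conditional law, so it suffices to argue for a fixed $T$ and for i.i.d.\ $\mathcal{W}$-valued random variables $Z^1,Z^2,\dotsc$, transferring the conclusion back via the mixture. As in the proof of Proposition~\ref{proposition:empiricalFrequencies}, the set $D=\{t : P(Z^1_t\neq Z^1_{t-})>0\}$ is at most countable (otherwise some $D_\varepsilon$ is infinite, forcing a positive fraction of paths to have infinitely many jumps, contradicting cadlag). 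A sample-path discontinuity of $X_t=(X^i_t)_{i}$ at time $s$ means $X^i_s\neq X^i_{s-}$ for at least one $i$. Now split into two cases. If $s\notin D$: by the SLLN the limiting fraction of coordinates jumping at $s$ is $P(Z^1_s\neq Z^1_{s-})=0$, so only finitely many coordinates can jump at $s$ --- but I must push this to ``exactly one.'' Here I would use that two distinct coordinates jumping at the same time $s\notin D$ is a probability-zero event for each fixed $s$, and handle all such $s$ simultaneously: by Fubini, for i.i.d.\ cadlag paths, almost surely no two coordinates $Z^i,Z^j$ share a discontinuity time outside $D$ (the joint law of $(Z^i,Z^j)$ for $i\neq j$ is a product, and the set of shared continuity-set discontinuities has measure zero by a standard argument counting pairs). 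So almost surely every discontinuity time $s\notin D$ is of type (I) with a unique offending coordinate. If $s\in D$: then $P(X^1_t\text{ discontinuous at }s\mid\mathcal{E})>0$ on a positive-probability event (this follows from $P(Z^1_s\neq Z^1_{s-})>0$ in the i.i.d.\ reduction, i.e.\ applied conditionally on $\mathcal{E}$), and the SLLN applied to the i.i.d.\ indicators $\mathbf{1}\{X^i_{s-}=j_1, X^i_s=j_2\}$ gives \eqref{eq:limFreqJump}, conditional on $\mathcal{E}$; the conditioning can be made uniform over the countably many $s\in D$ by a union bound over a countable set.

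For the countability of the full set of discontinuities of $t\mapsto X_t$: the type-(II) discontinuities lie in the countable set $D$; the type-(I) discontinuities are the union over $i\in\zz{N}$ of the (finite, on each $[0,T]$) discontinuity sets of the individual cadlag paths $X^i$, hence countable. For the projection claim: Proposition~\ref{proposition:empiricalFrequencies} (transferred through the mixture via Hewitt--Savage) already shows $Y_t=\pi(X_t)$ has cadlag paths, continuous off $D$; and $D$ is exactly the set of type-(II) discontinuity times, since at a type-(I) time only one coordinate changes and changing a single coordinate does not change a limiting frequency. Conversely at a type-(II) time $s$, equation \eqref{eq:limFreqJump} shows the frequency vector genuinely jumps (summing over $j_1$ with $j_1\neq j_2$ gives a strictly positive net change in $Y^{j_2}_s$ whenever the conditional jump probabilities into and out of $j_2$ differ, and at times in $D$ one can arrange --- or rather, one must check --- that $Y$ is actually discontinuous there; more carefully, $Y$ is discontinuous at $s$ iff $s$ is a discontinuity of the deterministic-given-$\mathcal{E}$ trajectory, which by Proposition~\ref{proposition:empiricalFrequencies} happens precisely on a countable set that we may take to be contained in, and we then argue equals, the support of the type-(II) jumps).

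The main obstacle I expect is Stage~3 in the case $s\notin D$: upgrading ``finitely many coordinates jump at $s$, for each fixed $s$'' to the uniform-in-$s$ statement ``almost surely, at every discontinuity time outside $D$ exactly one coordinate jumps.'' The per-time SLLN argument only controls fixed times, and there are uncountably many candidate times $s$; the fix is the Fubini/pairwise-independence observation that for the product measure governing $(Z^i)_{i}$, the random set $\bigcup_{i\neq j}(\mathrm{Disc}(Z^i)\cap\mathrm{Disc}(Z^j)\setminus D)$ is almost surely empty --- because for each ordered pair $(i,j)$ the i.i.d.\ paths $Z^i,Z^j$ are independent, the conditional law of $\mathrm{Disc}(Z^i)$ given $Z^j$ puts no mass on the fixed (given $Z^j$) countable set $\mathrm{Disc}(Z^j)\setminus D$ away from $D$, so $\mathrm{Disc}(Z^i)\cap\mathrm{Disc}(Z^j)\subseteq D$ a.s.; a countable union over pairs finishes it. Transferring back through the conditional-i.i.d.\ representation of Proposition~\ref{proposition:colorSwatchSequence} then yields the claim for $(X_t)$.
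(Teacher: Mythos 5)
Your proposal is correct and follows essentially the same route as the paper's proof: reduce via Proposition~\ref{proposition:colorSwatchSequence} to (conditionally) i.i.d.\ color swatches, show $D$ is countable, rule out shared jump times outside $D$ by the pairwise-independence/Fubini argument and a countable union over pairs, apply the SLLN at the countably many times of $D$ to get \eqref{eq:limFreqJump}, and control the projection using the $\varepsilon$-partition from the proof of Proposition~\ref{proposition:empiricalFrequencies}. The only superfluous piece is your attempted converse that $Y_t$ genuinely jumps at every type-(II) time: the proposition asserts only that discontinuities of $Y$ occur at type-(II) times, and the converse can fail (mass can be exchanged between colors at a type-(II) time without changing the frequency vector), so you should simply drop that digression rather than try to repair it.
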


\begin{proof}
It suffices to prove the corresponding assertions for the restriction
$X_{[0,1]}$, because the case $X_{[0,T]}$ for arbitrary $T$ follows by
rescaling, and the case $X_{[0,\infty )}$ follows by exhaustion.
By Proposition~\ref{proposition:colorSwatchSequence}, it suffices to
prove the corresponding statement for sequences of i.i.d.\ color
swatches. Thus, we assume that $Z^{1},Z^{2},\dotsc$ are independent,
identically distributed $\mathcal{W}$-valued random variables and, as
in the proof of Proposition \ref{proposition:empiricalFrequencies}, we
define $D_{\varepsilon }$ to be the set of all $s\in [0,1]$ such that
$P\{Z^{1}_{t} \; \text{discontinuous at}\; t=s \}\geq \varepsilon $
and $D:=\bigcup_{\varepsilon >0}D_{\varepsilon }$.  As was shown in
the proof of Proposition~\ref{proposition:empiricalFrequencies},  
$D$ is at most countable.  By the strong law
of large numbers, for each $s\in D$ and any pair $j_{1}\not =j_{2} \in
[k]$,
\[
	\lim_{n \rightarrow
	\infty}n^{-1}\sum_{i=1}^{n}\mathbf{1}\{Z^{i}_{s-}=j_{1}
	\;\text{and } Z^{i}_{s}=j_{2} \}=P\{Z^{1}_{s-}=j_{1}
	\;\text{and } Z^{1}_{s}=j_{2} \}.
\]

Next, we show that for $s\not \in D$ there can be at most one index
$i\in \zz{N}$ for which $Z^{i}_{t}$ has a discontinuity at $t=s$.  For
any $i\in\zz{N}$, let $D^i$ be the set of discontinuities of $Z^i$.
Because $(Z^i)_{i\in\zz{N}}$ is an i.i.d.\ collection of color
swatches,
\begin{equation*}
P\left\{Z^{j}\text{ discontinuous at some} \;t\in D^i\setminus
D\,\left|\,D^i\right\}\right.=0,\quad\text{for all }j\neq i,
\end{equation*}
since for any $t\not \in D$ the \emph{unconditional} probability that $Z^{j}$ is
discontinuous at $t$ is $0$. 
Hence, unconditionally, the probability that  $Z^i$ and
$Z^{j}$ share a common point of discontinuity outside $D$ is 0. 
Since there are only countably many pairs $i,j$, it follows that
there is zero probability that some pair has a common discontinuity
outside of $D$.

Finally, by the proof of Proposition
\ref{proposition:empiricalFrequencies}, for each $\varepsilon >0$
there is a countable partition of the open set $[0,1]\setminus
D_{\varepsilon}$ into intervals $J_{l}$ such that \eqref{eq:Ji-char}
holds. It follows by the SLLN that the limiting empirical frequencies
$\pi(Z_t)$ cannot vary by more than $\varepsilon$ in any of the
intervals $J_{l}$. Since $\varepsilon >0$ can be made arbitrarily
small, it follows that the only discontinuities of the projection $\pi
(Z_{t})$ must be at points $t\in D$.

\end{proof}

\section{Bounded Variation of Sample Paths in $\simplexk$}\label{sec:bv}

\subsection{Mass Transfer}\label{ssec:mass-transfer} Let
$(X_{t})_{t\geq 0}$ be a continuous-time, exchangeable Markov process
on $[k]^{\zz{N}}$ with cadlag sample paths, and let $Y_{t}=\pi
(X_{t})$ be its projection to the simplex.
Corollary~\ref{corollary:stochasticMatrixProj} implies that for every
pair of colors $i,j\in [k]$ and any two times $s\leq t$, the limiting
fraction $Q_{s,t} (i,j)$ of sites with color $i$ at time $s$ that flip
to color $j$ at time $t$ exists, and the matrix $Q_{s,t}$ is
stochastic. The results of Section~\ref{section:color-swatch-process}
imply that the process $Q_{s,t}$ can be extended to a two-parameter
process $\{Q_{s,t} \}_{0\leq s\leq t}$ valued in the space
$\mathcal{S}_{k}$ of $k\times k$ stochastic matrices. For each fixed
$s$, the sample paths $\{Q_{s,t} \}_{s\leq t}$ are cadlag, with
discontinuities only at the type-(II) discontinuities of
$X_{t}$. Furthermore, the matrices $Q_{s,t}$ satisfy the \emph{cocycle
equations} \eqref{eq:cocycle} and the compatibility condition \eqref{eq:q-y}.
We define the \emph{total mass transfer} $T_{s,t}$ between times $s$ and
$t$ to be the limiting fraction of sites $m\in \zz{N}$ that have
different colors at times $s$ and $t$; that is,
\begin{equation}\label{eq:mass-transfer}
	T_{s,t}:=\sum_{a\not =b} Y^{a}_{s} Q_{s,t} (a,b)
	=\lim_{n \rightarrow \infty}n^{-1}\sum_{i=1}^{n}
	\mathbf{1}\{X^{i}_{s}\not = X^{i}_{t} \} .
\end{equation}
Observe that the variation
$\xnorm{Y_{t}-Y_{s}}_{1}:=\sum_{i=1}^k|Y_t^i-Y_s^i|$ in the frequency
vector between times $s$ and $t$ is bounded above by
$T_{s,t}$. Moreover, the  mass transfer process $(T_{s,t})_{0\leq
s\leq t}$ satisfies
\begin{equation}\label{eq:sub-cocycle}
	T_{r,t}\leq T_{r,s}+T_{s,t} \quad \text{for all} \;\; r\leq s\leq
	t.
\end{equation}

\begin{prop}\label{proposition:bv}
With probability one, the mass transfer process $(T_{s,t})_{0\leq s\leq t}$  
has bounded variation on finite intervals, that is,
\begin{equation}\label{eq:mass-transfer-bv} 
	\sup_{0=s_{0}\leq s_{1}\leq s_{2}\leq \dotsb \leq s_{n}=1}
	\sum_{i=0}^{n-1}T_{s_{i},s_{i+1}} <\infty  \quad \text{a.s.}
\end{equation}
\end{prop}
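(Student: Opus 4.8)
The plan is to reduce to a sequence of i.i.d.\ color swatches, bound the total variation of $(T_{s,t})$ by the conditional expected number of jumps of a single coordinate, and then extract finiteness of that conditional expectation from the Markov hypothesis. By rescaling time it suffices to bound \eqref{eq:mass-transfer-bv}, and by Proposition~\ref{proposition:colorSwatchSequence}, conditional on $\mathcal{E}_{[0,1]}$ the coordinate color swatches $X^{1}_{[0,1]},X^{2}_{[0,1]},\dots$ are i.i.d.\ with a (random) common law $\Theta$ on $\mathcal{W}$. Evaluating the limit in \eqref{eq:mass-transfer} conditionally by the strong law of large numbers gives, for all $s\le t$ in $[0,1]$,
\[
	T_{s,t}=\Theta\{w\in\mathcal{W}:w(s)\neq w(t)\}\qquad\text{a.s.}
\]

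Next I would estimate the variation sum for an arbitrary partition. Fix $0=s_{0}<s_{1}<\dots<s_{n}=1$. For any $w\in\mathcal{W}$ the half-open intervals $(s_{i},s_{i+1}]$ are disjoint, and $w$, being $[k]$-valued and cadlag, is piecewise constant; hence $w(s_{i})\neq w(s_{i+1})$ forces $w$ to have a jump in $(s_{i},s_{i+1}]$, so the number of indices $i$ with $w(s_{i})\neq w(s_{i+1})$ is at most the number $V_{w}$ of jumps of $w$ in $(0,1]$. Integrating against $\Theta$,
\[
	\sum_{i=0}^{n-1}T_{s_{i},s_{i+1}}=\int_{\mathcal{W}}\#\{i:w(s_{i})\neq w(s_{i+1})\}\,\Theta(dw)\ \le\ \int_{\mathcal{W}}V_{w}\,\Theta(dw),
\]
and the right-hand side is independent of the partition. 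Since, conditional on $\mathcal{E}_{[0,1]}$, the swatch $X^{1}_{[0,1]}$ has law $\Theta$, this right-hand side is $\mathcal{E}_{[0,1]}$-measurable and equals $\mathbb{E}[V\mid\mathcal{E}_{[0,1]}]$, where $V$ denotes the number of jumps of the single coordinate path $X^{1}_{[0,1]}$. Therefore the supremum in \eqref{eq:mass-transfer-bv} is at most $\mathbb{E}[V\mid\mathcal{E}_{[0,1]}]$ (in fact equal, by refining along dyadic partitions and monotone convergence).

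The crux is then to show $\mathbb{E}[V\mid\mathcal{E}_{[0,1]}]<\infty$ almost surely. Since $X^{1}_{[0,1]}$ is cadlag, $V<\infty$ a.s., but finiteness of the conditional expectation is not automatic for a generic exchangeable cadlag process, and this is exactly where the Markov property must be used. It suffices to prove $\mathbb{E}[V]<\infty$, since a nonnegative integrable random variable has an a.s.\ finite conditional expectation. I would establish this using the time-homogeneous strong Markov property of $(X_{t})_{t\ge0}$ at the successive jump times $\tau_{1}<\tau_{2}<\dots$ of $X^{1}$: this yields a renewal-type recursion for $\mathbb{P}(\tau_{n}\le1)$, and the finiteness of $[k]$ (so that $n$ jumps of $X^{1}$ in $[0,1]$ force some color to be revisited of order $n/k$ times) together with the a.s.\ finiteness of $V$ should force $\mathbb{P}(\tau_{n}\le1)$ to decay geometrically, whence $\mathbb{E}[V]=\sum_{n\ge1}\mathbb{P}(\tau_{n}\le1)<\infty$. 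An alternative is to work conditionally on $\mathcal{E}_{[0,1]}$, where by Corollary~\ref{corollary:IH-description} the path $X^{1}_{[0,1]}$ is an inhomogeneous continuous-time Markov chain on $[k]$ with transition matrices $Q_{s,t}$, and to prove the general fact that a cadlag inhomogeneous Markov chain on a finite state space has a finite expected number of jumps on a bounded interval; applying this to the $\mathcal{E}_{[0,1]}$-measurable family $\{Q_{s,t}\}$ for a.e.\ realization then gives the claim. Finally, \eqref{eq:mass-transfer-bv} on $[0,1]$ gives the statement on any bounded interval by rescaling, and on $[0,\infty)$ by a countable exhaustion.

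The hard part is the last step. Because the Feller property may fail, the coordinate processes need not have infinitesimal jump rates, so there is no uniform rate bound available; one must upgrade the a.s.\ finiteness of the number of jumps of $X^{1}$ to finiteness of its expectation using only the (time-homogeneous) Markov structure and the finiteness of the color set $[k]$.
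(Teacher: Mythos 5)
Your reduction is sound and matches the spirit of the paper's argument: by conditioning on $\mathcal{E}_{[0,1]}$ and refining partitions, the supremum in \eqref{eq:mass-transfer-bv} equals $\mathbb{E}[V\mid\mathcal{E}_{[0,1]}]$, where $V$ is the number of jumps of $X^1_{[0,1]}$, so the proposition is equivalent to a.s.\ finiteness of this conditional expectation. The gap is that you never prove that finiteness, and your primary route rests on a false claim. It does \emph{not} suffice to prove $\mathbb{E}[V]<\infty$: the unconditional expectation can be infinite for a non-Feller exchangeable Markov process. For instance, with $k=2$ let $Y_0$ be random in $(0,1)$, let the coordinates of $X_0$ be i.i.d.\ Bernoulli$(Y_0)$ given $Y_0$, and, writing $y$ for the limiting frequency of the current configuration (defined as in Example~\ref{example:nonFellerProj}), let each coordinate independently flip $0\to1$ at rate $c(y)\,y$ and $1\to0$ at rate $c(y)(1-y)$. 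Then $Y_t\equiv Y_0$, the process is exchangeable, time-homogeneous Markov (the kernel depends on the configuration only through $y$), and has cadlag paths whenever $c(Y_0)<\infty$ a.s.; but choosing the law of $Y_0$ and $c$ with $\mathbb{E}[c(Y_0)Y_0(1-Y_0)]=\infty$ gives $\mathbb{E}[V]=\infty$ while $V<\infty$ a.s. Consequently no argument can deliver geometric decay of $P(\tau_n\le1)$; moreover the strong Markov property at the jump times of $X^1$ concerns the whole configuration and provides no renewal structure for a single coordinate, and a.s.\ finiteness of $V$ by itself never forces geometric tails.

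Your alternative route does name the correct statement---conditionally on $\mathcal{E}_{[0,1]}$ the coordinate $X^1_{[0,1]}$ is an inhomogeneous Markov chain on $[k]$ with transition matrices $Q_{s,t}$, and one must show that cadlag paths force a finite (conditional) expected number of jumps---but this is not an off-the-shelf ``general fact''; it is precisely the substance of the proposition, and you give no proof of it. The paper supplies the missing argument by contradiction: on the event of infinite variation one extracts, for each $M$, on the order of $Mk$ disjoint subintervals in each of which the accumulated transferred mass exceeds $Mk^{2}$; a pigeonhole argument over initial colors $r$ with $Y_0^r>0$, together with the bound $\prod_j\bigl(1-Q_{0,s_{ij}}(r,\kappa^*)Q_{s_{ij},t_{ij}}(\kappa^*,\cdot)\bigr)\le e^{-M}$ for the conditional probability of no jump in a subinterval, shows that $X^1$ has at least $M$ discontinuities with conditional probability at least $1-Me^{-M}$; letting $M\to\infty$ contradicts the a.s.\ finiteness of the number of jumps. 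Some argument of this kind (or an independent proof of your ``general fact'' for inhomogeneous finite-state chains) is required; as written, the proposal stops exactly where the real work begins.
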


\subsection{Proof of Proposition
\ref{proposition:bv}}\label{ssec:binaryCase-bv} We first prove
Proposition \ref{proposition:bv} in the special case $k=2$.  This case
is more straightforward than the general case because if the mass
transfer process has unbounded variation, then any transfer of mass
from 0 to 1 must eventually be matched by a corresponding transfer
from 1 to 0; whereas, in the general case, mass can be transported along
incomplete cycles in the complete graph $K_{[k]}$.

\begin{proof}[Proof for the case $k=2$]

Assume that \eqref{eq:mass-transfer-bv} does not hold; then there is a
positive probability that the supremum in \eqref{eq:mass-transfer-bv} is
$+\infty$.  Note that this event is in the exchangeable $\sigma$-algebra 
$\mathcal{E}_{[0,1]}$.  Furthermore, on this event, for every $M\geq1$,
we can partition $[0,1]$ into non-overlapping sub-intervals $J_1,\ldots,J_M$
such that the total variation of the mass transfer process within each sub-interval
exceeds $M$.  The mass transfer from color $a$ to color
$b$  in any time  interval $(s,t]$ is bounded above by $Q_{s,t}
(a,b)$; and so, to each $J_{i}$, $i=1,\ldots,M$, there must be a further partition into $N$ non-overlapping subintervals $J_{ij}= (s_{ij},t_{ij})$
such that
\begin{equation}\label{eq:q-variation}
	\sum_{j=1}^{N} Q_{s_{ij},t_{ij}} (0,1) \geq M
	\quad \text{and} \quad 
	\sum_{j=1}^{N} Q_{s_{ij},t_{ij}} (1,0) \geq M.
\end{equation}

Recall that, by Corollary~\ref{corollary:IH-description}, the coordinate
process $(X^{m}_{t})_{t\in [0,1]}$ is, conditional on the exchangeable
$\sigma$-algebra $\mathcal{E}_{[0,1]}$, an inhomogeneous
continuous-time Markov chain on $[k]$ with transition probabilities
$Q_{s,t}$. Consequently, on the event \eqref{eq:q-variation}, the
conditional probability that $X^{1}_{t}$ has no discontinuity in $J_{i}$ is bounded above by
\[
	\max \left(\prod_{j=1}^{N}  (1-Q_{s_{ij},t_{ij}} (0,1)),
	    \prod_{j=1}^{N}  (1-Q_{s_{ij},t_{ij}} (1,0)) \right)
	    \leq e^{-M}.
\]
 Hence,
\[
	P (X^{1}_{[0,1]} \;\text{has at least }\; M \; \text{discontinuities}
	\,|\, \mathcal{E}_{[0,1]}) \geq 1-Me^{-M},
\]
on the event that there exist non-overlapping intervals $J_{ij}$
satisfying \eqref{eq:q-variation}. Since $M$ is arbitrary, it follows
that $X^{1}_{[0,1]}$ must have infinitely many
discontinuities, contradicting the hypothesis that the sample paths
$(X_{t})_{t\geq 0}$ are cadlag.  This completes the proof for $k=2$.
\end{proof}

The proof of Proposition \ref{proposition:bv} in the general case $k\geq2$ is a consequence of the same ideas as the $k=2$ case above.  The key distinction is that mass can be transferred along incomplete cycles of the complete graph $K_{[k]}$ and these cycles can vary measurably with the exchangeable $\sigma$-algebra of the process.  However, since there are only finitely many states, unbounded variation demands that, for every $M\geq1$, there must be at least one starting state $j\in[k]$ for which $Y_0^j>0$ and the conditional probability that $X^1_{[0,1]}$ has at least $M$ discontinuities given $X_0^1=j$ is at least $1-Me^{-M}$.

\begin{proof}[Proof of Proposition \ref{proposition:bv} for $k\geq2$]
Again, we suppose that there is positive probability that the supremum in \eqref{eq:mass-transfer-bv} is $+\infty$.  Then, for every positive integer $M\geq1$, the total variation must exceed $M^2k^{3}$.  Therefore, there is some sequence of times $0=s_0<s_1<\cdots<s_{N}=1$ such that $\sum_{i=0}^{N-1}T_{s_i,s_{i+1}}\geq M^2 k^{3}$.  It follows that we can specify a collection $J_1,\ldots,J_{Mk}$ of non-overlapping subintervals so that the total variation within each subinterval exceeds $Mk^2$.  In this way, for each $i=1,\ldots,Mk$, there is a further subpartition $J_{i1},\ldots,J_{iN}$ of $J_i$ and a color $\kappa_i^*\in[k]$ such that 
\begin{equation}\label{eq:M-lb}\sum_{j=1}^N Y_{s_{ij}}^{\kappa_i^*}Q_{s_{ij},t_{ij}}(\kappa_i^*,[k]\setminus\{\kappa_i^*\})\geq Mk.\end{equation}
Furthermore, we have
\[Y_{s_{ij}}^{\kappa_i^*}:=\sum_{l=1}^kY_0^lQ_{0,s_{ij}}(l,\kappa_i^*),\]
and so there must be at least one $\kappa_i^0\in[k]$ for which
\begin{equation}\label{eq:M-lb-t0}\sum_{j=1}^N Y_{0}^{\kappa_i^0}Q_{0,s_{ij}}(\kappa_i^0,\kappa_i^*)Q_{s_{ij},t_{ij}}(\kappa_i^*,[k]\setminus\{\kappa_i^*\})\geq M.\end{equation}
Since $k<\infty$, any such $\kappa_i^0$ must have
$Y_0^{\kappa_i^0}>0$.  Consequently, within each subinterval, the
conditional probability that $X_{[0,1]}^1$ experiences no discontinuities in
$J_i$, given $X_{0}^1=\kappa_i^0$ and the exchangeable
$\sigma$-algebra, satisfies
\begin{align*}
	P\{X_{[0,1]}^1 &\text{ has no  discontinuity in }J_i\,|\,\mathcal{E}_{J_i},X_{0}^1=\kappa_i^0\}\\
	&\leq
	\prod_{j=1}^N\left(1-Q_{0,s_{ij}}(\kappa_i^0,\kappa_i^*)Q_{s_{ij},t_{ij}}(\kappa_i^*,[k]\setminus\{\kappa^*_{i}\})\right) \\
	&\leq e^{-M}.
\end{align*}

By the pigeonhole principle, there must be some $r\in[k]$ such that
$r=\kappa^0_i$ for at least $M$ indices $i=1,\ldots,Mk$.  In this
case, let $1\leq i_1<\cdots<i_{M}\leq Mk$ be a subset of those indices
for which $r=\kappa^0_i$.  Then we have
\begin{eqnarray*}
\lefteqn{P\{X_{[0,1]}^1\text{ has less than }M\text{ discontinuities}\,|\,\mathcal{E}_{[0,1]},\,X_0^1=r\}\leq}\\
&\leq&P\left\{\bigcup_{j=1}^M\{X_{[0,1]}^1\text{ has no discontinuity in }J_{i_j}\}\,|\,\mathcal{E}_{[0,1]},\,X_0^1=r\right\}\\
&\leq & \sum_{j=1}^MP\left\{X_{[0,1]}^1\text{ has no discontinuity in }J_{i_j}\,|\,\mathcal{E}_{[0,1]},\,X_0^1=r\right\}\\
&\leq & \sum_{j=1}^Me^{-M}\\
&=&Me^{-M}.
\end{eqnarray*}
Therefore,
\[P\left\{X_{[0,1]}^1\text{ has at least }M\text{ discontinuities}\,|\,\mathcal{E}_{[0,1]},\,X_0^1=r\right\}\geq1-Me^{-M},\]
for every $M\geq1$.  Since $Y_0^r$ must be strictly positive for any
such color, we conclude that there is positive probability that
$X_t^1$ has more than $M$ discontinuities in $[0,1]$, for every
$M\geq1$, which contradicts the assumption that $X_t^1$ has cadlag
sample paths.  We conclude that the projection $Y_t=\pi(X_t)$ must
have locally bounded variation almost surely.
\end{proof}

\section{Construction of associated chain on
$[k]^{\zz{N}}$}\label{section:construction} In this section we prove
Theorem \ref{thm:k2}, which states that, for every Markov process
$(Y_{t})_{t\geq 0}$ on the $k$-simplex $\simplexk$ whose sample paths
are cadlag and of locally bounded variation, there exists an
exchangeable Markov process $(X_{t})_{t\geq 0}$ on $[k]^{\zz{N}}$
whose projection \eqref{eq:limFreqs} into $\simplexk$ has the same
distribution as $(Y_{t})_{t\geq 0}$. The strategy is as
follows. First, we will show that the process $Y_{t}$ uniquely
determines a two-parameter process $( Q_{s,t})_{0\leq s\leq t}$ taking
values in the space of $k\times k$ stochastic matrices such that $(
Q_{s,t})_{0\leq s\leq t}$ satisfies the cocycle relations
\eqref{eq:cocycle}, the compatibility condition \eqref{eq:q-y}, and a
minimality condition \eqref{eq:minimality} spelled out below. Then,
given the two-parameter process $(Q_{s,t})_{0\leq s\leq t}$, we will
construct an i.i.d.  sequence of   time-inhomogeneous Markov chains
$X^{i}_{t}$ with transition probability matrices $Q_{s,t}$. Finally,
we will show that $[k]^{\zz{N}}$-valued process
\[
	X_{t}=X^{1}_{t}X^{2}_{t}\dotsb 
\]
is Markov, exchangeable, has cadlag paths and projects to $Y_{t}$.

\begin{prop}\label{proposition:compatibility}
If $(Y_{t})_{t\in [0,a]}$ is a cadlag path of bounded variation in
$\simplexk$ then there exists a two-parameter family $\{Q_{s,t}
\}_{0\leq s\leq t\leq a}$ of $k\times k$ stochastic matrices
satisfying \eqref{eq:cocycle} and \eqref{eq:q-y} such that for every
subinterval $[c,d]\subseteq [0,a]$,
\begin{equation}\label{eq:minimality}
		\sup_{c=s_{0}\leq s_{1}\leq s_{2}\leq \dotsb \leq
		s_{n}=d}\sum_{i=0}^{n-1} T_{s_{i},s_{i+1}} =
		\xnorm{(Y_{t})_{t\in [c,d]}}_{TV} 
\end{equation}
where $\xnorm{\cdot}_{TV}$ denotes the total variation norm on
$\simplexk$-valued paths and $T_{s,t}$ is the mass-transfer function
defined by  \eqref{eq:mass-transfer}. The mapping that sends the path
$(Y_{t})_{t\in [0,a]}$ to the two-parameter function $\{Q_{s,t} \}_{0\leq
s\leq t\leq a}$ is measurable, and for each pair $c<d$ the section $\{Q_{s,t}
\}_{c\leq s\leq t\leq d}$ depends only on $(Y_{t})_{t\in [c,d]}$.
\end{prop}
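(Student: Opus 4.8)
The plan is to construct $\{Q_{s,t}\}$ path-by-path by reparametrizing $(Y_{t})$ by arc length and then following the ``laziest possible'' flow of mass along the resulting unit-speed curve. Fix a cadlag path $(Y_{t})_{t\in[0,a]}$ of bounded variation, let $V_{t}:=\xnorm{(Y_{r})_{r\in[0,t]}}_{TV}$ be its (cadlag, nondecreasing) total-variation function, with $V_{0}=0$ and $L:=V_{a}$, and regard $\simplexk$ as a metric space under the total-variation distance $d(y,y'):=\frac{1}{2}\xnorm{y-y'}_{1}$, in which $\xnorm{\cdot}_{TV}$ is taken. Reparametrizing $(Y_{t})$ by arc length yields a $1$-Lipschitz (hence absolutely continuous) curve $\gamma\colon[0,L]\to\simplexk$ with $Y_{t}=\gamma(V_{t})$ for all $t$, where on each gap $(V_{\sigma-},V_{\sigma})$ of $V$ -- corresponding to a jump of $(Y_{t})$ at $\sigma$ -- the curve $\gamma$ is the straight segment from $Y_{\sigma-}$ to $Y_{\sigma}$. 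Since $\gamma$ has unit speed, $\sum_{a}\dot\gamma^{a}(u)=0$ and $\sum_{a}(\dot\gamma^{a}(u))_{-}=\frac{1}{2}\sum_{a}|\dot\gamma^{a}(u)|=1$ for a.e.\ $u$. I will build a two-parameter ``minimal-transfer propagator'' $\{\tilde Q_{u,v}\}_{0\le u\le v\le L}$ along $\gamma$ and then set $Q_{s,t}:=\tilde Q_{V_{s},V_{t}}$.

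To build $\tilde Q$, observe that to accommodate the instantaneous change $\dot\gamma(u)$ mass must flow from the colors with $\dot\gamma^{a}(u)<0$ to those with $\dot\gamma^{b}(u)>0$; I route it proportionally by setting, for $a\ne b$,
\[
	\gamma^{a}(u)\,G_{u}(a,b):=(\dot\gamma^{a}(u))_{-}\,(\dot\gamma^{b}(u))_{+}
\]
(interpreted as $0$ when $\gamma^{a}(u)=0$, which is consistent since then $(\dot\gamma^{a}(u))_{-}=0$ for a.e.\ such $u$) and $G_{u}(a,a):=-\sum_{b\ne a}G_{u}(a,b)$. A short computation using $\sum_{a}\dot\gamma^{a}(u)=0$ gives the key identity $\gamma(u)G_{u}=\dot\gamma(u)$, with $\gamma(u)$ viewed as a row vector. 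Let $\tilde Q_{u,v}$ be the transition matrix of the time-inhomogeneous Markov chain on $[k]$ generated by $G_{w}$, $u\le w\le v$ -- a classical propagator, i.e.\ the fundamental solution of $\partial_{v}\tilde Q_{u,v}=\tilde Q_{u,v}G_{v}$ with $\tilde Q_{u,u}=I$, whenever the trace of $\gamma$ avoids $\partial\simplexk$; the general case is addressed below. Then each $\tilde Q_{u,v}$ is stochastic (zero row-sums and nonnegative off-diagonal entries of $G$), the propagator identity $\tilde Q_{u,v}=\tilde Q_{u,w}\tilde Q_{w,v}$ holds, and by the key identity together with uniqueness for the linear equation, $\gamma(u)\tilde Q_{u,v}=\gamma(v)$. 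Moreover the $\gamma(u)$-weighted off-diagonal mass $\sum_{a\ne b}\gamma^{a}(u)\tilde Q_{u,v}(a,b)$ is at most the expected number of jumps of the chain started from $\gamma(u)$, which by $\gamma(u)\tilde Q_{u,w}=\gamma(w)$ equals
\[
	\int_{u}^{v}\sum_{a\ne b}\gamma^{a}(w)\,G_{w}(a,b)\,dw=\int_{u}^{v}\Big(\sum_{a}(\dot\gamma^{a}(w))_{-}\Big)^{2}dw=\int_{u}^{v}1\,dw=v-u .
\]

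Granting this, the three conclusions of Proposition~\ref{proposition:compatibility} follow. The cocycle relation \eqref{eq:cocycle} is the propagator identity together with the monotonicity of $V$, and the compatibility relation \eqref{eq:q-y} is $Y_{s}Q_{s,t}=\gamma(V_{s})\tilde Q_{V_{s},V_{t}}=\gamma(V_{t})=Y_{t}$. For the minimality \eqref{eq:minimality}, the mass-transfer function \eqref{eq:mass-transfer} is exactly the $Y_{s}$-weighted off-diagonal mass of $Q_{s,t}=\tilde Q_{V_{s},V_{t}}$, so the last display gives $T_{s,t}\le V_{t}-V_{s}$; hence for any $[c,d]\subseteq[0,a]$ and any partition $c=s_{0}\le\dots\le s_{n}=d$ we get $\sum_{i}T_{s_{i},s_{i+1}}\le\sum_{i}(V_{s_{i+1}}-V_{s_{i}})=V_{d}-V_{c}=\xnorm{(Y_{t})_{t\in[c,d]}}_{TV}$, while the reverse inequality requires no construction: every compatible family has $T_{s,t}\ge d(Y_{s},Y_{t})$ (a coupling of $Y_{s}$ and $Y_{t}$ disagrees with probability at least their total-variation distance), so the supremum over partitions of $\sum_{i}T_{s_{i},s_{i+1}}$ is at least $\xnorm{(Y_{t})_{t\in[c,d]}}_{TV}$ by definition. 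Measurability of $(Y_{t})\mapsto\{Q_{s,t}\}$ is inherited step by step, since $Y\mapsto V$, the arc-length reparametrization $\mapsto\gamma$, $\gamma\mapsto G$, the generator-to-propagator map, and $\tilde Q\mapsto Q$ are each measurable. Finally, $\tilde Q_{V_{s},V_{t}}$ depends only on $G_{w}$ for $w\in[V_{s},V_{t}]$, which is determined by $\gamma|_{[V_{s},V_{t}]}$ -- itself, by the definition of the arc-length reparametrization, a translate of the arc-length reparametrization of $(Y_{r})_{r\in[s,t]}$ -- so the section $\{Q_{s,t}\}_{c\le s\le t\le d}$ depends only on $(Y_{t})_{t\in[c,d]}$.

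I expect the main obstacle to be the analytic care needed in constructing the propagator $\tilde Q$, not in the verifications above. The generator $G_{w}$ is unbounded near the closed set where some coordinate $\gamma^{a}(w)$ vanishes, although the mass flux $\gamma^{a}(w)G_{w}(a,b)\,dw$ remains a finite measure and the expected total number of jumps stays equal to $v-u$, so only finitely many jumps occur almost surely. The clean way to handle this is to first prove the proposition for paths whose trace stays in the interior of $\simplexk$ (where $G$ is bounded and the equation classical) and then pass to the general case by approximating $\gamma$ with the interior curves $(1-\varepsilon)\gamma+\varepsilon\mathbf{u}$, $\mathbf{u}$ the uniform distribution on $[k]$: their total variations converge to that of $\gamma$, and a stability estimate bounding $\xnorm{Q_{s,t}^{\varepsilon}-Q_{s,t}^{\varepsilon'}}$ in terms of the corresponding mass transfers shows the propagators converge; the limit inherits the cocycle and compatibility relations by continuity and the minimality bound by lower semicontinuity of total variation. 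A secondary point, absent when $k=2$, is that for $k\ge 3$ the minimal routing of mass is not unique -- mass may be sent around incomplete cycles of $K_{[k]}$, and an optimal routing may vary measurably with the path -- so the proportional rule above is merely one canonical measurable choice; minimality holds for it, and indeed for any choice whose instantaneous off-diagonal rates are supported on the edges carrying the flux $\dot\gamma(w)$, precisely because such a choice has off-diagonal mass at most $v-u$.
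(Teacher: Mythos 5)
Your construction is essentially the paper's own: you reparametrize by the total-variation clock (with jumps opened into straight-line segments), route mass proportionally from the shrinking colors to the growing ones (the paper's ``Marx--Engels'' protocol), and realize $Q_{s,t}$ as the resulting time-inhomogeneous propagator, verifying the cocycle relation \eqref{eq:cocycle}, the compatibility relation \eqref{eq:q-y}, the minimality identity \eqref{eq:minimality}, measurability, and locality in the same way the paper does. If anything, your rendering is the more careful one --- you normalize the flux by $\gamma^{a}(u)$ so that $\gamma(u)G_{u}=\dot\gamma(u)$ actually holds, you use the time-ordered propagator rather than a plain matrix exponential of integrated rates, and you supply the coupling lower bound $T_{s,t}\geq d(Y_{s},Y_{t})$ needed for the reverse inequality in \eqref{eq:minimality} --- and the boundary/unbounded-generator issue you flag (handled by your interior approximation) is not confronted in the paper's proof either.
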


\begin{proof}
First, we consider the special case where the path $t\mapsto Y_{t}$ is
smooth, and then we will indicate the modifications necessary for the
more general case where the path is not necessarily smooth but is
continuous and has (locally) bounded variation. Finally, we will
indicate how to augment the construction to account for the
possibility of jump discontinuities. Assume that $(Y_{t})_{t\in
[0,a]}$ is smooth, and set $DY_{t}=dY_{t}/dt$.  Because each $Y_{s}$
is a probability distribution on the set $[k]$, its entries must sum
to $1$, and so the entries of the vector $DY_{s}$ must sum to $0$ for
almost every $s$. Let $J_{+}, J_{-}$ and $J_{0}$ be the sets of
indices $i\in [k]$ such that $DY^{i}_{s}>0$, $DY^{i}_{s}<0$, and
$DY^{i}_{s}=0$, respectively, and define for each $s\in [0,a]$ a
$k\times k$ rate matrix $R_{s}$ as follows:
\begin{align*}
	R_{s} (i,j)&= +DY^{i}_{s} DY^{j}_{s}/\sum_{i'\in
	J_{-}}DY^{i'}_{s} \quad \text{if} \; i\in J_{-} \; \text{and}\; j\in J_{+};\\
	R_{s} (i,j)&= -DY^{i}_{s} DY^{j}_{s}/\sum_{i'\in
	J_{-}}DY^{i'}_{s} \quad \text{if} \; i\in J_{+} \; \text{and}\; j\in J_{-};\\
	R_{s} (i,i)&= DY^{i}_{s}; \quad \text{and}\\
	R_{s} (i,j)&=0 \quad \text{otherwise}.
\end{align*}
This matrix determines the instantaneous rates of mass flow between
indices $i,j\in [k]$ following the \textsc{Marx-Engels} 
protocol (to each according to his needs; from each according to his
abilities.)  The two-parameter stochastic matrix-valued process
$Q_{s,t}$ is then defined by the matrix exponential
\begin{equation}\label{eq:marx}
	Q_{s,t}:=\exp \left\{\int_{s}^{t} R_{u} \,du \right\}.
\end{equation}
That this is in fact a stochastic matrix follows because the row sums
of each $R_{u}$ are $0$, and the compatibility condition
\eqref{eq:q-y} follows by a routine differentiation of
\eqref{eq:marx}.  The total variation identity \eqref{eq:minimality}
follows from the fact that for any smooth function $f:[a,b]
\rightarrow \zz{R}$ the total variation of $f$ on the interval $[a,b]$ is
\[
	\xnorm{f}_{TV}=\int_{a}^{b}|Df (s)|\,ds.
\]

Next, consider the more general case where $Y_{t}$ is continuous and
of locally bounded variation, but not necessarily smooth. The
preceding construction may fail because, although $t\mapsto Y_{t}$ is
differentiable at almost every $t$ (by the Lebesgue differentiation
theorem), $Y_{t}-Y_{s}$ is not always equal to the integral of the
derivative $DY_{r}$ over the interval $r\in [s,t]$, and this is needed
for the compatibility condition \eqref{eq:q-y}. To circumvent this
difficulty, we differentiate not with respect to $t$ but instead with
respect to the total variation of the path. In particular, let 
\[
	T_{t}=\xnorm{(Y_{s})_{s\in [0,t]}}_{TV}
\]
be the total variation of the path $Y$ on the time interval $[0,t]$,
and set
\[
	DY_{t}=\frac{dY_{t}}{dT_{t}};
\]
that is, $DY^{i}_{t}$ is the Radon-Nikodym derivative of the
signed measure $[a,b]\mapsto Y^{i}_{b}-Y^{i}_{a}$ with respect to the
positive measure $[a,b]\mapsto T_{b}-T_{a}$. Since the signed measure
is absolutely continuous with respect to the positive measure, it
follows that for every $t<\infty$,
\[
	Y_{t}-Y_{0}=\int_{0}^{t} DY_{s} \,dT_{s}.
\]
Now  define the rate
matrices $R_{s}$ as above and set 
\[
	Q_{s,t}:=\exp \left\{\int_{s}^{t} R_{u} \,dT_{u} \right\}.
\]
The rest of the argument now proceeds as above. Observe that this
is really a time-change argument and could be reformulated as
such. In particular,
if one defines $\tilde{Y}_{T_{t}}=Y_{t}$, then the process
$\tilde{Y}_{s}$ has differentiable paths with bounded derivative, and
so the argument for the differentiable case applies, producing a
two-parameter process $(\tilde{Q}_{s,t})_{s\leq t}$.  Reversing the
time change then gives the desired process $(Q_{s,t})_{s\leq t}$.

Finally, suppose that $t\mapsto Y_{t}$ has jump discontinuities at
times $r\in C$, where $C$ is a countable set. Because $(Y_{t})_{t\geq
0}$ has locally bounded variation, the jump sizes (measured by the
$L^{1}$-norm on the simplex $\simplexk$) are the sizes $\Delta T_{t}$
of the jumps of the total variation function $T_{t}$, which are
summable over any bounded time interval. Define a new path
$\tilde{Y}_{t}$ by ``opening'' each jump discontinuity of $Y_{t}$,
that is, at each $r\in C$ insert an interval of length $\Delta T_{r}$,
and let $\tilde{Y}_{t}$ follow the straight-line path from $Y_{r-}$ to
$Y_{r}$ (at speed $1$) during this inserted interval. The new path
$\tilde{Y}_{t}$ will still be of locally bounded variation, and it
will be continuous, so the construction of the preceding paragraph
will now yield a two-parameter family $\tilde{Q}_{s,t}$ satisfying the
cocycle identity \eqref{eq:cocycle} and the compatibility condition
\eqref{eq:q-y} for the path $\tilde{Y}_{t}$. By ``closing'' all of the
intervals opened in creating $\tilde{Y}_{t}$, we obtain a
two-parameter family $Q_{s,t}$ that satisfies the compatibility
condition \eqref{eq:q-y} for the path $Y_{t}$.
\end{proof}

\begin{proof}
[Proof of Theorem~\ref{thm:k2}]
Let $(Y_{t})_{t\geq 0}$ be a Markov process on the simplex $\simplexk$
whose sample paths are cadlag and of locally bounded variation. By
Proposition~\ref{proposition:compatibility}, there is a compatible
random semigroup $(Q_{s,t})_{0\leq s\leq t}$ such that, for any $c<d$,
the section  $(Q_{s,t})_{c\leq s\leq d}$ is a measurable function of
the path $(Y_{t})_{t\in [c,d]}$. Given the realization of
$(Q_{s,t})_{0\leq s\leq t}$, let $(X^{i}_{t})_{t\geq 0}$ be
i.i.d.\ copies of a time-inhomogeneous Markov chain on $[k]$ with
transition probability matrices $Q_{s,t}$ (cf. equation
\eqref{eq:IH-MC}) and initial distribution $Y_{0}$, and let 
\[
	X_{t}=X^{1}_{t}X^{2}_{t}\dotsb .
\]
This process is, by construction, exchangeable. It is also Markov
because, for any $s\geq 0$, the evolution of $X_{t}$ for $t\geq s$
depends, by construction, only on the state $X_{s}$ and the section
$(Q_{r,t})_{s\leq r\leq t}$ of the random semigroup, which in turn
depends only on $(Y_{t})_{t\geq s}$. That the projection of $X_{t}$ to
the simplex $\simplexk$ is $Y_{t}$ follows by the strong law of large
numbers and the compatibility condition \eqref{eq:q-y}: in particular,
\begin{eqnarray*}
\lim_{n\rightarrow\infty}n^{-1}\sum_{i=1}^{n}\mathbf{1}\{X_t^i=b\}&=&\lim_{n \rightarrow \infty}n^{-1}\sum_{i=1}^{n}\sum_{a=1}^k
	\mathbf{1}\{X^{i}_{0} =a \; \text{and} \;
	X^{i}_{t} =b\} \\
&=&\sum_{a=1}^{k}Y_0^aQ_{0,t}(a,b)\\
&=&Y^{b}_{t}.
\end{eqnarray*}
Finally, the fact  that the paths of $X_{t}$ are cadlag follows from
the minimality condition \eqref{eq:minimality}, because this implies
that the total mass transfer in any bounded time interval is
finite, which implies that the number of jumps in any of the
component chains $X^{i}_{t}$ in a bounded time interval is
finite.
\end{proof}

\section{The Feller Case}\label{section:feller}

Theorem \ref{thm:semigroup} associates to every exchangeable, cadlag
process $(X_t)_{t\geq0}$ on $[k]^{\zz{N}}$ a compatible random
semigroup $(Q_{s,t})_{0\leq s\leq t}$.  The random semigroup
$(Q_{s,t})_{0\leq s\leq t}$ is a two-parameter process on $\stochk$
that gives the transition probabilities of a time-inhomogeneous Markov
process on $[k]$.  The outcome of Theorem \ref{thm:semigroup} can be
viewed as a de Finetti-type characterization of exchangeable Markov
processes $(X_t)_{t\geq0}$ on $[k]^{\zz{N}}$ with cadlag sample paths.
In the special case when $(X_t)_{t\geq0}$ is a Feller process, the
cut-and-paste representation of $(X_t)_{t\geq0}$, as proven in Theorem
2.6 of \cite{Crane2012}, implies a description of the compatible
random semigroup as a {\em L\'evy matrix flow}.

\begin{defn}\label{def:flow}
A {\em L\'evy matrix flow} is a collection $(Q_{s,t})_{0\leq s\leq t}$
of stochastic matrices such that
\begin{itemize}
	\item[(i)] for every $s<t<u$, $Q_{s,u}=Q_{s,t}Q_{t,u}$
a.s.; \item[(ii)] for $s<t$, the law of $Q_{s,t}$ depends only on
$t-s$ 
\item [(iii)] for $s_1<s_2<\cdots<s_n$, the matrices
$Q_{s_1,s_2},Q_{s_2,s_3},\ldots,Q_{s_{n-1},s_n}$ are independent;
and, \item[(iv)] $Q_{0,0}=I_k$, the $k\times k$ identity matrix, and
$Q_{0,t}\stackrel{P}{\rightarrow} I_k$ as $t\downarrow0$.
\end{itemize}
\end{defn}
In Definition \ref{def:flow}(iv), convergence is with respect to the
total variation metric on $\stochk$: 
\[\xnorm{Q-Q'}_{TV}:=\sum_{i,j=1}^k|Q_{ij}-Q'_{ij}|,\quad Q,Q'\in\stochk.\]

Under the product-discrete topology on $[k]^{\zz{N}}$, the combination
of exchangeability and the Feller property is equivalent to the
combination of exchangeability and consistency under subsampling; see discussion
in Section \ref{section:introduction} and reference to \cite{Billingsley1995}.
Hence, under the additional assumption that $(X_t)_{t\geq0}$ is
Feller, each restriction $(X_t^{[n]})_{t\geq0}$ of $(X_t)_{t\geq0}$ to
$[k]^{[n]}$ is {\em unconditionally} a time-homogeneous Markov chain
on $[k]^{[n]}$, for every $n\in\mathbb{N}$.  When $(X_t)_{t\geq0}$ is
Feller, so is its projection $(Y_t)_{t\geq0}$ into $\simplexk$.  The
following observation follows directly from the discussion in
\cite{Crane2012}.

\begin{prop}\label{prop:flow}
Let $(X_t)_{t\geq0}$ be an exchangeable Feller process on
$[k]^{\zz{N}}$.  Then its compatible random semigroup
$(Q_{s,t})_{0\leq s\leq t}$ is a L\'evy matrix flow corresponding to a
unique Feller process on $\stochk$.  That is, there exists a unique
Feller process $(S_t)_{t\geq0}$ on $\stochk$ such that, for all $0\leq
s\leq t$,
\begin{itemize}
	\item $Q_{0,t}=S_t$,
	\item $Q_{s,t}\equalinlaw S_{t-s}$ and
	\item $Q_{s,t}$ is measurable with respect to $\sigma\langle S_{r}\rangle_{s\leq r\leq t}$.
\end{itemize}
\end{prop}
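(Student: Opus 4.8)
The plan is to read off all of the assertions from the cut-and-paste representation of \cite{Crane2012}, once that representation is matched up with the compatible random semigroup supplied by Theorem~\ref{thm:semigroup}. Because $(X_t)_{t\geq0}$ is Feller it is consistent under subsampling, so Theorem~2.6 of \cite{Crane2012} gives its cut-and-paste description: there is a deterministic drift together with a time-homogeneous Poisson point process of stochastic cut-and-paste matrices, with the property that, conditionally on this driving data, the coordinate chains $X^1_t,X^2_t,\dots$ are i.i.d.\ time-inhomogeneous Markov chains on $[k]$ whose transition matrix over an interval $(s,t]$ is the time-ordered product $\widehat{Q}_{s,t}$ of the drift contribution and the matrices dropped in $(s,t]$. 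The first step is to check that $\widehat{Q}_{s,t}=Q_{s,t}$ a.s., where $Q_{s,t}$ is the limiting empirical transition matrix of Corollary~\ref{corollary:stochasticMatrixProj}: conditionally the $X^i_{[0,1]}$ are i.i.d., so the strong law of large numbers identifies the empirical transition frequency \eqref{eq:limitFreqsPairs} with the conditional transition probability $\widehat{Q}_{s,t}(i,j)$, and uniqueness of the de Finetti mixing measure forces the two two-parameter processes to coincide.

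With $Q_{s,t}=\widehat{Q}_{s,t}$, axioms (i)--(iv) of Definition~\ref{def:flow} are then immediate. The cocycle relation (i) is \eqref{eq:cocycle} (equivalently, the ordered-product structure). Time-homogeneity of the driving Poisson process and constancy of the drift give (ii), that the law of $Q_{s,t}$ depends only on $t-s$; disjoint time intervals see independent portions of the point process, giving the independence in (iii). For (iv), $Q_{0,0}=I_k$ by convention, and $\xnorm{Q_{0,t}-I_k}_{TV}\to0$ as $t\downarrow0$ because $\xnorm{Q_{0,t}-I_k}_{TV}$ is controlled by the total mass transfer $T_{0,t}$, which tends to $0$ since the component chains have cadlag paths (finite jump rate) and the drift is continuous.

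Next, set $S_t:=Q_{0,t}$, so $S_0=I_k$; by the cocycle relation, for $s\leq t$ one has $S_t=S_sQ_{s,t}$ with $Q_{s,t}$ independent of $\sigma\langle S_r\rangle_{r\leq s}$ and $Q_{s,t}\equalinlaw S_{t-s}$. Hence $(S_t)_{t\geq0}$ is a time-homogeneous Markov process with transition kernel $P_h(A,\cdot)$ equal to the law of $AS_h$, and semigroup $T_hf(A)=\mathbb{E}\bigl[f(AS_h)\bigr]$ for $f\in C(\stochk)$. Since $\stochk$ is compact and matrix multiplication is jointly continuous, $A\mapsto T_hf(A)$ is continuous by bounded convergence; the semigroup identity $T_sT_t=T_{s+t}$ is the stationarity and independence of increments; and $T_hf\to f$ as $h\downarrow0$ follows from $S_h\to I_k$ in probability, which is axiom (iv). Thus $(S_t)_{t\geq0}$ is Feller, and the three bullets hold: $Q_{0,t}=S_t$ by construction, $Q_{s,t}\equalinlaw S_{t-s}$ by (ii), and the measurability of $Q_{s,t}$ with respect to $\sigma\langle S_r\rangle_{s\leq r\leq t}$ is part of the cut-and-paste picture, since $Q_{s,t}$ is assembled from the flow's driving data on $(s,t]$.

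Finally, uniqueness: the one-dimensional marginals of $(S_t)$ are pinned down by the law of $(X_t)$ because $S_t=Q_{0,t}$ is a measurable functional of $(X_0,X_t)$, and the cocycle relation together with independence of increments then determines all finite-dimensional distributions, via $(S_{t_1},\dots,S_{t_n})=(Q_{0,t_1},\,Q_{0,t_1}Q_{t_1,t_2},\,\dots)$ with independent factors $Q_{t_{i-1},t_i}\equalinlaw S_{t_i-t_{i-1}}$; so the Feller process with the stated properties is unique. The step I expect to demand the most care is the very first one --- establishing that the abstract compatible random semigroup delivered by Theorem~\ref{thm:semigroup} is literally the cut-and-paste flow of \cite{Crane2012}; granting that identification, the remaining steps are bookkeeping with the driving Poisson point process together with a routine compactness argument for the Feller property.
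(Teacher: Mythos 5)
Your proposal is correct and follows essentially the same route as the paper: the paper offers no detailed argument at all, stating only that the proposition ``follows directly from the discussion in \cite{Crane2012},'' i.e.\ from the cut-and-paste representation (Theorem~2.6 there) combined with the equivalence of the Feller property and consistency under subsampling. Your write-up simply fills in the details of that same identification --- matching the compatible random semigroup of Theorem~\ref{thm:semigroup} with the cut-and-paste flow, verifying the L\'evy flow axioms, and setting $S_t=Q_{0,t}$ --- so it is a faithful, more explicit version of the paper's intended argument.
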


\end{document}